\providecommand{\U}[1]{\protect\rule{.1in}{.1in}}
\newtheorem{theorem}{Theorem}[section]
\newtheorem{proposition}[theorem]{Proposition}
\newtheorem{corollary}[theorem]{Corollary}
\newtheorem{example}[theorem]{Example}
\newtheorem{remark}[theorem]{Remark}
\newtheorem{lemma}[theorem]{Lemma}
\newtheorem{final remark}[theorem]{Final Remark}
\newtheorem{definition}[theorem]{Definition}
\begin{document}

\title{On the transformation of vector-valued sequences by linear and multilinear operators}
\author{Geraldo Botelho\thanks{Supported by CNPq Grant
302177/2011-6 and Fapemig Grant PPM-00326-13.}~ and Jamilson R. Campos\thanks{Supported
by a CAPES Postdoctoral scholarship.\thinspace \hfill\newline\indent2010 Mathematics Subject
Classification: 47L22, 46G25, 46B45.\newline\indent Key words: Banach sequence spaces, ideals of multilinear operators, multilinear stability.}}
\date{}
\maketitle

\begin{abstract} In this paper we provide a unifying approach to the study of Banach ideals of linear and multilinear operators defined, or characterized, by the transformation of vector-valued sequences. We investigate and apply the linear and multilinear stabilities of some frequently used classes of vector-valued sequences.
\end{abstract}

\section*{Introduction}
Important classes of linear and nonlinear operators between Banach spaces are defined, or characterized, by the transformation of vector-valued sequences. Perhaps the most popular example is the celebrated ideal of absolutely $p$-summing operators: a linear operator $u \colon E \longrightarrow F$ is absolutely $p$-summing if $u$ sends weakly $p$-summable sequences in $E$ to absolutely $p$-summable sequences in $F$. The  monograph \cite{djt} is totally devoted to the study of absolutely summing operators. The consideration of these and other classes of vector-valued sequences originated several well studied ideals of linear and multilinear operators -- Banach operator ideals and Banach multi-ideals; which, up to now, have been investigated individually in the literature.
The following examples show just how broad in scope this approach is:\\
(i) $p$-dominated $n$-linear operators send weakly $p$-summable sequences to absolutely $\frac{p}{n}$-summable sequences \cite{boperu,david,popa} (the case $n = 1$ recovers the $p$-summing linear operators),\\
(ii) absolutely $(s;r)$-summing linear and multilinear operators send weakly $r$-summable sequences to absolutely $s$-summable sequences \cite{tiago, BotelhoBlasco, daniellaa},\\
(iii) unconditionally $p$-summing linear and multilinear operators send weakly $p$-summable sequences to unconditionally $p$-summable sequences \cite{matosjunek}, \\
(iv) almost summing linear and multilinear operators send unconditionally summable sequences to almost unconditionally summable sequences \cite{botelhobraunssjunek, danieljoilson, danielmarcela},\\
(v) a multilinear operator is weakly sequentially continuous at the origin if it sends weakly null sequences to norm null sequences \cite{arondineen, livrodineen, valdivia} (the case $n =1$ gives the ideal of completely continuous linear operators).\\
(vi) Cohen strongly $p$-summing multilinear operators send absolutely $np$-summable sequences to Cohen strongly $p$-summable sequences \cite{achour07,jamilson,mezrag09} (the case $n = 1$ recovers the ideal of Cohen strongly $p$-summing linear operators \cite{cohen73}).

The main purpose of this paper is to synthesize the study of these Banach operator ideals and multi-ideals by introducing an abstract framework of generating ideals by means of transformation of vector-valued sequences (cf. Theorem \ref{propideal}) that accomodates the already studied ideals as particular instances (cf. Example \ref{exam}). It is worth mentioning that the notions of {\it finitely determined} and {\it linearly stable} sequences classes we introduce here settle some inaccuracies that have been occurring in the field (cf. Example \ref{erro} and Remark \ref{errodiana}).

It will become clear soon that the preservation of certain classes of vector-valued sequences by linear and/or multilinear operators is an important issue in our subject. As we shall see in Section 2, the preservation by linear operators is a basic assumption to generate ideals and to avoid artificial examples. And, as a matter of fact, all classes of vector-valued sequences that play an important role in the theory of ideals are preserved by linear operators. The results we prove in Section 3 will make clear that the preservation by multilinear operators is a completely different issue. First, not all usual classes of vector-valued sequences are multilinearly stable, for example, weakly $p$-summable sequences and unconditionally $p$-summable sequences for $p > 1$ and weakly null sequences. Second, while the proofs of the linear stability are usually simple, the proofs that some of these classes of sequences are multilinearly stable demand nontrivial typical multilinear arguments. These are the cases of the classes of weakly summable sequences, unconditionally summable sequences, almost unconditionally summable sequences and Cohen strongly $p$-summable sequences. In the end, as an application of our multilinear stability results we improve a result of \cite{bu12} on Cohen almost summing multilinear operators.

The letters $E, E_1, \ldots, E_n,F$ shall denote Banach spaces over $\mathbb{K} = \mathbb{R}$ or $\mathbb{C}$. The closed unit ball of $E$ is denoted by $B_E$ and its topological dual by $E'$. By $BAN$ we denote the class of all Banach spaces over $\mathbb{K}$. Given Banach spaces $E$ and $F$, the symbol $E \stackrel{1}{\hookrightarrow} F$ means that $E$ is a linear subspace of $F$ and $\|x\|_F \leq \|x\|_E$ for every $x \in E$. By ${\cal L}(E_1, \ldots, E_n;F)$ we denote the Banach space of $n$-linear operators $A \colon E_1 \times \cdots \times E_n \longrightarrow F$ endowed with the usual sup norm. Given $\varphi_m \in E_m'$, $m = 1, \ldots, n$, and $b \in F$, consider the operator $\varphi_1 \otimes \cdots \otimes \varphi_n \in {\cal L}(E_1, \ldots, E_n;F)$ given by
$$\varphi_1 \otimes \cdots \otimes \varphi_n \otimes b(x_1, \ldots, x_n) = \varphi_1(x_1) \cdots \varphi_n(x_n)b. $$
Linear combinations of operators of this type are called {\it $n$-linear operators of finite type}.

\section{Classes of vector-valued sequences}

In this section we construct an abstract framework that encompasses all classes of vector-valued sequences that are important in our study. By $c_{00}(E)$ we denote the set of all $E$-valued finite sequences, which, as usual, can be regarded as infinite sequences by completing with zeros. For every $j \in \mathbb{N}$, $e_j = (0,\ldots, 0,1,0,0,\ldots)$ where $1$ appears at the $j$-th coordinate.

\begin{definition}\label{def1}\rm A {\it  class of vector-valued sequences $X$}, or simply a {\it sequence class $X$}, is a rule that assigns to each $E \in BAN$ a Banach space $X(E)$ of $E$-valued sequences, that is $X(E)$ is a vector subspace of $E^{\mathbb{N}}$ with the coordinatewise operations, such that:
 $$c_{00}(E) \subseteq X(E) \stackrel{1}{\hookrightarrow}  \ell_\infty(E){\rm ~ and~ } \|e_j\|_{X(\mathbb{K})}= 1 {\rm ~for~ every~} j.$$

A sequence class $X$ is {\it finitely determined} if for every sequence $(x_j)_{j=1}^\infty \in E^{\mathbb{N}}$, $(x_j)_{j=1}^\infty \in X(E)$ if and only if $\displaystyle\sup_k \left\|(x_j)_{j=1}^k  \right\|_{X(E)} < +\infty$ and, in this case,
$$\left\|(x_j)_{j=1}^\infty  \right\|_{X(E)} = \sup_k \left\|(x_j)_{j=1}^k  \right\|_{X(E)}. $$
\end{definition}

\begin{example}\label{firstex}\rm Let $1 \leq p < + \infty$. Letting $X(E)$ be any of the spaces listed below, the rule $E \mapsto X(E)$ is a sequence class:

\medskip

\noindent $\bullet$ $\ell_\infty(E)$ = bounded $E$-valued sequences with the sup norm.\\
$\bullet$ $c_0(E)$ = norm null $E$-valued sequences with the sup norm.\\
$\bullet$ $c_0^w(E)$ = weakly null $E$-valued sequences with the sup norm.\\
$\bullet$ $\ell_p(E) $ = absolutely $p$-summable $E$-valued sequences with the usual norm $\|\cdot\|_p$.\\
$\bullet$ $\ell_p^w(E)$ = weakly $p$-summable $E$-valued sequences with the norm
$$\|(x_j)_{j=1}^\infty\|_{w,p} = \sup_{\varphi \in B_{E'}}\|(\varphi(x_j))_{j=1}^\infty\|_p. $$
$\bullet$ $\ell_p^u(E) = \left\{(x_j)_{j=1}^\infty \in \ell_p^w(E) : \displaystyle\lim_k \|(x_j)_{j=k}^\infty\|_{w,p} = 0 \right\}$ with the norm inherited from $\ell_p^w(E)$ (unconditionally $p$-summable sequences, see \cite[8.2]{df}).\\
$\bullet$ ${\rm Rad}(E)$ = almost unconditionally summable $E$-valued sequences, in the sense of \cite[Chapter 12]{djt}, with the norm $\displaystyle\|(x_j)_{j=1}^\infty\|_{\rm Rad(E)} = \left(\int_0^1 \left\|\sum_{j=1}^\infty r_j(t) x_j \right\|^2 dt \right)^{1/2},$
where $(r_j)_{j=1}^\infty$ are the Rademacher functions.\\
$\bullet$ $\displaystyle{{\rm RAD}(E)} = \left\{(x_j)_{j=1}^\infty \in E^{\mathbb{N}}: \|(x_j)_{j=1}^\infty\|_{{\rm RAD}(E)}:= \sup_k \|(x_j)_{j=1}^k\|_{{\rm Rad}(E)}< +\infty\right\}$ \cite{BotelhoBlasco, blascoetal}.\\
$\bullet$ $\displaystyle\ell_p\langle E \rangle  = \left\{(x_j)_{j=1}^\infty \in E^{\mathbb{N}}: \|(x_j)_{j=1}^\infty\|_{\ell_p\langle E \rangle}:= \sup_{(\varphi_j)_{j=1}^\infty \in B_{\ell_{p^*}^w(E')}} \|(\varphi_j(x_j))_{j=1}^\infty\|_1< +\infty\right\}$,\\ where $\frac{1}{p} + \frac{1}{p^*} = 1$,  (Cohen strongly $p$-summable sequences, see, e.g., \cite{cohen73}).

\medskip

The sequence classes $\ell_\infty(\cdot), \ell_p(\cdot), \ell_p^w(\cdot)$, $\ell_p\langle \,\cdot\, \rangle$ and ${\rm RAD}(\cdot)$ are finitely determined.
\end{example}

\begin{remark}\label{remark1}\rm (a) A few words about the spaces ${\rm Rad}(E)$ and ${\rm RAD}(E)$ are in order:  for every Banach space $E$, ${\rm Rad}(E) \subseteq {\rm RAD}(E)$ with equality of norms in ${\rm Rad}(E)$; and ${\rm Rad}(E) = {\rm RAD}(E)$ if and only if $E$ does not contain a copy of $c_0$ (see, e.g., \cite[Section V.5]{tarieladze}).\\
(b) The theory could be alternatively developed by letting $X(E)$ be a $p$-Banach space, $0 < p \leq 1$, keeping the same $p$ for every Banach space $E$. In this case, the framework we are constructing would encompass the spaces $\ell_p(E), \ell_p^w(E)$ and $\ell_p^u(E)$ for $0 < p < 1$.
\end{remark}

The following result describes how the transformation of vector-valued sequences by linear and multilinear operators works. In what follows, for the linear case just take $n=1$.

\begin{proposition}\label{firstprop} Let $n \in \mathbb{N}$ and $X_1, \ldots, X_n,Y$ be sequence classes. The following conditions are equivalent for a given multilinear operator $A \in {\cal L}(E_1, \ldots, E_n;F)$:\\
{\rm (a)} $(A(x_j^1, \ldots, x_j^n))_{j=1}^\infty \in Y(F)$ whenever $(x_j^m)_{j=1}^\infty \in X_m(E_m)$, $m = 1, \ldots, n$.\\
{\rm (b)} The induced map
$$\widehat{A}\colon X_1(E_1) \times \cdots \times X_n(E_n) \longrightarrow Y(F)~,~\widehat{A}\left ((x_j^1)_{j=1}^\infty, \ldots, (x_j^n)_{j=1}^\infty\right)= (A(x_j^1, \ldots, x_j^n))_{j=1}^\infty, $$
is a well-defined continuous $n$-linear operator.

The conditions above imply condition {\rm (c)} below, and they are all equivalent if the sequence classes $X_1, \ldots, X_n$ and $Y$ are finitely determined.\\
{\rm (c)} There is a constant $C> 0$ such that
\begin{equation}
\left\|(A(x_j^1, \ldots, x_j^n))_{j=1}^k\right\|_{Y(F)}\leq C \cdot \prod_{m=1}^n \left\|(x_j^m)_{j=1}^k\right\|_{X_m(E_m)}, \label{eq1}
\end{equation}
for every $k \in \mathbb{N}$ and all finite sequences $x_j^m \in E_m, j=1, \ldots, k, m = 1,\ldots, n$.

In this case,
\begin{equation}\|\widehat{A}\|= \inf\{C : (\ref{eq1}) {\rm ~holds}\}\label{eqnorma} .\end{equation}
\end{proposition}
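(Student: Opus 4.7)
The plan is to prove $(b) \Rightarrow (a) \Rightarrow (b)$ (with the first implication immediate), then $(b) \Rightarrow (c)$, and finally $(c) \Rightarrow (a)$ under the finitely determined hypothesis; the norm identity (\ref{eqnorma}) will drop out of the last two steps. The only non-trivial point in $(a) \Rightarrow (b)$ is continuity: $\widehat{A}$ is well-defined by (a) and multilinear because the algebraic operations in all sequence spaces involved are coordinatewise. For continuity I would argue by a closed graph argument. The key observation is that both $X_m(E_m) \stackrel{1}{\hookrightarrow} \ell_\infty(E_m)$ and $Y(F) \stackrel{1}{\hookrightarrow} \ell_\infty(F)$, so norm convergence in any of these sequence spaces forces coordinatewise convergence in the underlying Banach space. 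Hence, if $(x_j^{m,k})_{j=1}^\infty \to (x_j^m)_{j=1}^\infty$ in each $X_m(E_m)$ as $k \to \infty$ and $\widehat{A}((x_j^{1,k})_j, \ldots, (x_j^{n,k})_j) \to (y_j)$ in $Y(F)$, the joint continuity of $A$ gives $y_j = A(x_j^1, \ldots, x_j^n)$, so $\widehat{A}$ has closed graph. Fixing all but one variable yields a closed-graph linear operator between Banach spaces, continuous by the linear closed graph theorem; thus $\widehat{A}$ is separately continuous, and the classical fact that a separately continuous multilinear operator between Banach spaces is jointly continuous (via the uniform boundedness principle applied variable by variable) delivers (b).

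For $(b) \Rightarrow (c)$, I would evaluate $\widehat{A}$ at zero-extended finite sequences: $(x_1^m, \ldots, x_k^m, 0, 0, \ldots) \in c_{00}(E_m) \subseteq X_m(E_m)$ has $X_m(E_m)$-norm equal to $\|(x_j^m)_{j=1}^k\|_{X_m(E_m)}$ by the convention in Definition \ref{def1}, and the same bookkeeping applies to the image in $Y(F)$. Applying the bound $\|\widehat{A}\|$ therefore delivers (\ref{eq1}) with $C = \|\widehat{A}\|$, so in particular $\inf\{C : (\ref{eq1}) \text{ holds}\} \leq \|\widehat{A}\|$. For $(c) \Rightarrow (a)$ under finite determinacy, fix $(x_j^m)_{j=1}^\infty \in X_m(E_m)$ for each $m$; finite determinacy of each $X_m$ gives $\|(x_j^m)_{j=1}^k\|_{X_m(E_m)} \leq \|(x_j^m)_{j=1}^\infty\|_{X_m(E_m)}$ for every $k$, so (\ref{eq1}) yields
\[
\sup_k \left\|(A(x_j^1, \ldots, x_j^n))_{j=1}^k\right\|_{Y(F)} \leq C \prod_{m=1}^n \left\|(x_j^m)_{j=1}^\infty\right\|_{X_m(E_m)}.
\]
Finite determinacy of $Y$ then places $(A(x_j^1, \ldots, x_j^n))_{j=1}^\infty$ in $Y(F)$ with norm bounded by the same quantity; this gives (a) and $\|\widehat{A}\| \leq C$ for every admissible $C$, hence the reverse inequality $\|\widehat{A}\| \leq \inf\{C : (\ref{eq1}) \text{ holds}\}$ and thereby (\ref{eqnorma}).

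The main obstacle lies in $(a) \Rightarrow (b)$, specifically in upgrading separate continuity to joint continuity of $\widehat{A}$. The closed graph theorem cannot produce joint continuity directly in the multilinear setting; one must invoke the uniform boundedness principle variable by variable, and this is the single place where one genuinely uses that the domains are Banach spaces. The remaining implications are a clean bookkeeping exercise, with the finitely determined hypothesis precisely bridging finite-truncation norm estimates and infinite-sequence membership.
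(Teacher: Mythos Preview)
Your proof is correct and follows essentially the same route as the paper's. The one point of divergence is in the continuity step of $(a) \Rightarrow (b)$: after establishing that $\widehat{A}$ has closed graph via the same coordinatewise argument you give (convergence in $X_m(E_m)$ and $Y(F)$ implies coordinatewise convergence because of the embeddings into $\ell_\infty$, and then the continuity of $A$ identifies the limit), the paper invokes a multilinear closed graph theorem directly, citing \cite{cecilia}. You instead reduce to the linear closed graph theorem by freezing all but one variable, obtaining separate continuity, and then upgrade to joint continuity via the uniform boundedness principle. Your approach is more self-contained and in effect reproves the cited multilinear closed graph theorem; the paper's version is shorter by outsourcing that step. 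The remaining implications $(b) \Rightarrow (c)$ and $(c) \Rightarrow (a)$ under finite determinacy, as well as the derivation of the norm identity (\ref{eqnorma}), match the paper's argument exactly.
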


\begin{proof} The implication (b) $\Longrightarrow$ (a) is immediate. Assuming (a) it is clear that $\widehat{A}$ is well-defined and $n$-linear. Let us prove that it is continuous in the case $n = 2$ (the general case is identical). To do so, let $(x_j)_{j=1}^\infty \in X_1(E_1)$ and  $(y_j)_{j=1}^\infty \in X_2(E_2)$ be sequences such that $(x_j,y_j) \longrightarrow (x,y)$ in $X_1(E_1) \times X_2(E_2)$ and $\widehat{A}(x_j, y_j) \longrightarrow z$ in $Y(G)$. Then $x_j \longrightarrow x$ in $X_1(E_1)$ and $y_j \longrightarrow y$ in $X_2(E_2)$. Call
$$x_j = (\xi_{j,m})_{m=1}^\infty,\, y_j = (\eta_{j,m})_{m=1}^\infty,\, x = (\xi_m)_{m=1}^\infty,\, y = (\eta_m)_{m=1}^\infty,\, z = (w_m)_{m=1}^\infty. $$
 The condition $X(\cdot) \stackrel{1}{\hookrightarrow}  \ell_\infty(\cdot)$ guarantees that convergence in the sequence spaces we working with implies coordinatewise convergence, so $\xi_{j,m} \stackrel{j}{\longrightarrow} \xi_m$ in $E_1$ and $ \eta_{j,m} \stackrel{j}{\longrightarrow} \eta_m$ in $E_2$ for every $m$. The continuity of $A$ gives $A(\xi_{j,m}, \eta_{j,m}) \stackrel{j}{\longrightarrow} A(\xi_m, \eta_m)$ in $F$ for every $m$. From
$$\left(A(\xi_{j,m}, \eta_{j,m})  \right)_{m=1}^\infty = \widehat{A} (x_j, y_j) \stackrel{j}{\longrightarrow} z = (w_m)_{m=1}^\infty {\rm ~in~} Y(F), $$
it follows that $A(\xi_{j,m}, \eta_{j,m})\stackrel{j}{\longrightarrow} w_m$ in $F$ for every $m$. Hence $A(\xi_m, \eta_m) = w_m$ for every $m$. Finally,
$$\widehat{A}(x,y) = \widehat{A} \left((\xi_m)_{m=1}^\infty,(\eta_m)_{m=1}^\infty \right)= \left(A(\xi_m, \eta_m) \right)_{m=1}^\infty = (w_m)_{m=1}^\infty = z,$$
proving that the graph of $\widehat{A}$ is closed. The continuity of $\widehat{A}$ follows from the closed graph theorem for multilinear operators (see, e.g., \cite{cecilia}).

The implication (b) $\Longrightarrow$ (c) and the inequality $ \inf\{C : (\ref{eq1}) {\rm ~holds}\} \leq \|\widehat{A}\|$ are obvious. Supposing that $X_1, \ldots, X_n$ and $Y$ are finitely determined, (c) $\Longrightarrow$ (b) and the reverse inequality  follow by taking the supremum over $k$ in (\ref{eq1}).
\end{proof}

In daily life, it is more practical do handle inequality (\ref{eq1}) rather than the transformation of vector-valued sequences. Problems, like the one described in Example \ref{erro}, may occur when the sequences classes involved are not finitely determined. Fortunately, as we shall see now, sometimes the transformation of non-finitely determined sequence classes is equivalent to the transformation of some other finitely determined ones. The consequence of Proposition \ref{firstprop} we shall present next, though a bit technical, has several practical applications (cf. Example \ref{erro} and the proofs of Theorems \ref{firsttheo} and \ref{theorad}).

When we say that a Banach space $E$ is a closed subspace of the Banach space $F$ we mean that $E$ is a linear subspace of $F$ and $\|\cdot\|_E = \|\cdot\|_F$ in $E$.

\begin{definition}\rm Let $X$ and $Y$ be sequence classes. We say that:\\
$\bullet$ $X < Y$ if, for every Banach space $E$, $X(E)$ is a closed subspace of $Y(E)$ and, for every sequence $(x_j)_{j=1}^\infty \in Y(E)$, $(x_j)_{j=1}^\infty \in X(E)$ if and only if $\displaystyle\lim_k \|(x_j)_{j=k}^\infty\|_{Y(E)} = 0$.\\
$\bullet$ $X \prec Y$ if, for every Banach space $E$, $X(E)$ is a closed subspace of $Y(E)$ and, for every sequence $(x_j)_{j=1}^\infty \in Y(E)$, $(x_j)_{j=1}^\infty \in X(E)$ if and only if $\displaystyle\lim_{k,l} \|(x_j)_{j=k}^l\|_{Y(E)} = 0$.
\end{definition}

\begin{corollary}\label{cornew} Let $n \in \mathbb{N}$ and $X_1, \ldots, X_n,Y, Z_1, \ldots, Z_n, W$ be sequence classes such that $Z_1, \ldots, Z_n, W$ are finitely determined. Suppose one of the following conditions holds:\\
\indent {\rm (i)} $X_m < Z_m$ for $m = 1, \ldots, n$, and $Y< W$;\\
\indent {\rm (ii)} $X_m \prec Z_m$ for $m = 1, \ldots, n$, and $Y \prec W$;\\
\indent {\rm (iii)} $X_m < Z_m$ for $m = 1, \ldots, n$, and $Y \prec W$.\\
Then, for all Banach spaces $E_1, \ldots, E_n, F$, the following are equivalent for an $n$-linear operator $A \in {\cal L}(E_1, \ldots, E_n;F)$:\\
{\rm (a)} $(A(x_j^1, \ldots, x_j^n))_{j=1}^\infty \in Y(F)$ whenever $(x_j^m)_{j=1}^\infty \in X_m(E_m)$, $m = 1, \ldots, n$.\\
{\rm (b)} $(A(x_j^1, \ldots, x_j^n))_{j=1}^\infty \in W(F)$ whenever $(x_j^m)_{j=1}^\infty \in Z_m(E_m)$, $m = 1, \ldots, n$.

In this case,
$$\|\widehat{A}\colon X_1(E_1) \times \cdots \times X_n(E_n) \longrightarrow Y(F)\| = \| \widehat{A}\colon Z_1(E_1) \times \cdots \times Z_n(E_n) \longrightarrow W(F)\|.$$
\end{corollary}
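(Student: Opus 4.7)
My plan is to reduce everything to the inequality (\ref{eq1}) from Proposition \ref{firstprop} and to exploit that each $X_m(E_m)$ is a closed subspace of $Z_m(E_m)$ with equal norm (and similarly $Y(F)\subseteq W(F)$). In particular, since $c_{00}(E)\subseteq X_m(E)\cap Z_m(E)$, the $X_m$- and $Z_m$-norms coincide on every finite sequence, and analogously for $Y$ and $W$; this is the bridge that will let me transfer inequalities back and forth.

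For (a)$\Rightarrow$(b), Proposition \ref{firstprop} applied to (a) gives (\ref{eq1}) with the $X_m$- and $Y$-norms and constant $C=\|\widehat{A}\colon X_1(E_1)\times\cdots\times X_n(E_n)\to Y(F)\|$. Because those norms agree with the $Z_m$- and $W$-norms on finite sequences, the same inequality holds verbatim with the $Z_m$- and $W$-norms. Since $Z_1,\ldots,Z_n,W$ are finitely determined, Proposition \ref{firstprop} now yields (b) together with $\|\widehat A\colon Z_1(E_1)\times\cdots\times Z_n(E_n)\to W(F)\|\leq C$.

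For (b)$\Rightarrow$(a), let $(x_j^m)_{j=1}^\infty\in X_m(E_m)$. Since $X_m(E_m)\subseteq Z_m(E_m)$ with equal norm, (b) yields $(A(x_j^1,\ldots,x_j^n))_{j=1}^\infty\in W(F)$; what remains is to verify the defining condition of $Y(F)$ inside $W(F)$. The trick is to leave the arguments $2,\ldots,n$ untouched and replace the first by the zero-padded tail $(0,\ldots,0,x_k^1,x_{k+1}^1,\ldots)$. This padded sequence still lies in $Z_1(E_1)$ (it differs from $(x_j^1)_{j=1}^\infty$ by an element of $c_{00}(E_1)$), its $Z_1$-norm equals $\|(x_j^1)_{j=k}^\infty\|_{Z_1(E_1)}$, and by multilinearity its image under $\widehat A$ is the zero-padded tail of $(A(x_j^1,\ldots,x_j^n))$. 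Continuity of $\widehat A$ then gives
\[
\bigl\|(A(x_j^1,\ldots,x_j^n))_{j=k}^\infty\bigr\|_{W(F)}\leq \|\widehat A\|\cdot\bigl\|(x_j^1)_{j=k}^\infty\bigr\|_{Z_1(E_1)}\cdot\prod_{m=2}^n\bigl\|(x_j^m)_{j=1}^\infty\bigr\|_{Z_m(E_m)},
\]
with the analogous window-version for $(\,\cdot\,)_{j=k}^l$. In case (i) the right-hand side tends to $0$ by $X_1<Z_1$, so $Y<W$ puts the image in $Y(F)$; case (ii) is identical using windows together with $X_1\prec Z_1$ and $Y\prec W$; case (iii) combines them, since finite-determinedness of $W$ forces $\|(\,\cdot\,)_{j=k}^l\|_W\leq\|(\,\cdot\,)_{j=k}^\infty\|_W$, so the tail decay from case~(i) automatically produces the window decay required by $Y\prec W$. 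The reverse norm inequality $\|\widehat A\colon X\to Y\|\leq\|\widehat A\colon Z\to W\|$ then follows from $\|\widehat A(x^1,\ldots,x^n)\|_Y=\|\widehat A(x^1,\ldots,x^n)\|_W$ combined with (b) applied on $Z_m$-norms.

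The main subtlety I expect is the careful interpretation of $\|(x_j)_{j=k}^\infty\|_{Z_1(E_1)}$ and $\|(x_j)_{j=k}^l\|_W$ as norms of genuine elements of the corresponding sequence spaces, and the verification that zero-padding the first argument indeed produces, via multilinearity, the zero-padded tail (or window) of the image. Both are routine once one uses that $c_{00}\subseteq X(E)$ and that $A$ is $n$-linear, but they are the step where one must resist the temptation to manipulate sequences formally rather than as elements of the specified Banach spaces.
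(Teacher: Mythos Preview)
Your (a)$\Rightarrow$(b) argument and the norm equality are correct and essentially identical to the paper's. The problem is in (b)$\Rightarrow$(a), specifically the sentence ``its $Z_1$-norm equals $\|(x_j^1)_{j=k}^\infty\|_{Z_1(E_1)}$.'' Here you are identifying the norm of the \emph{zero-padded} tail $(0,\ldots,0,x_k^1,x_{k+1}^1,\ldots)$ with the norm of the \emph{shifted} tail $(x_k^1,x_{k+1}^1,\ldots)$; the latter is what appears in the definitions of $<$ and $\prec$ (and what the paper uses, as one sees from its use of finite determinedness to identify $\sup_l\|(x_j)_{j=k}^l\|$ with $\|(x_j)_{j=k}^\infty\|$). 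Nothing in Definition~\ref{def1} forces these two norms to coincide: the rule $E\mapsto X(E)$ with $\|(x_j)\|_{X(E)}=\|x_1\|+\sup_{j\ge 2}\|x_j\|$ is a finitely determined, linearly stable sequence class with $\|e_j\|_{X(\mathbb K)}=1$, yet $\|(1,1,0,\ldots)\|=2$ while $\|(0,1,1,0,\ldots)\|=1$. So the step where you deduce $\|(x_j^1)_{j=k}^\infty\|_{Z_1}\to 0$ from $X_1<Z_1$, and the step where you deduce membership in $Y(F)$ from the vanishing of the zero-padded tail norm of the image, are both unjustified.

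The paper circumvents this by never feeding zero-padded infinite sequences to $\widehat A$. Instead, from (b) and Proposition~\ref{firstprop}[(a)$\Rightarrow$(c)] one gets the finite inequality
\[
\bigl\|(A(x_j^1,\ldots,x_j^n))_{j=k}^l\bigr\|_{W(F)}\le C\prod_{m=1}^n\bigl\|(x_j^m)_{j=k}^l\bigr\|_{Z_m(E_m)}
\]
for all $k<l$, where every norm is that of a finite sequence placed at the \emph{start} (hence genuinely the shifted interpretation). Taking $\sup_l$ and invoking finite determinedness of $W$ and each $Z_m$ turns this into an inequality between shifted-tail norms, which connects directly with $X_m<Z_m$ and $Y<W$. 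Your argument becomes correct if you simply replace ``apply $\widehat A$ to the zero-padded tail'' by ``apply the finite inequality to the window $(x_k^m,\ldots,x_l^m)$ in all $n$ slots and then pass to the supremum/limit''; but as written it relies on an unstated shift-invariance hypothesis.
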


\begin{proof} (a) $\Longrightarrow$ (b) From (a) and Proposition \ref{firstprop}[(a)  $\Longrightarrow$ (c)], having in mind that, in all three cases, $X_m(E_m)$ is a closed subspace of $Z_m(E_m)$ and $Y(F)$ is a subspace of $W(F)$), it follows that $$\left\|(A(x_j^1, \ldots, x_j^n))_{j=1}^k\right\|_{W(F)}\leq C \cdot \prod_{m=1}^n \left\|(x_j^m)_{j=1}^k\right\|_{Z_m(E_m)},$$
for arbitrary finite sequences. As $Z_1, \ldots, Z_n, W$ are finitely determined, Proposition \ref{firstprop}[(c)  $\Longrightarrow$ (a)] gives (b).\\
(b) $\Longrightarrow$ (a) Again from Proposition \ref{firstprop}[(a)  $\Longrightarrow$ (c)] we have
\begin{equation}\left\|(A(x_j^1, \ldots, x_j^n))_{j=k}^l\right\|_{Y(F)}\leq C \cdot \prod_{m=1}^n \left\|(x_j^m)_{j=k}^l\right\|_{X_m(E_m)},
\label{eqeq}
\end{equation}
for arbitrary $l > k$ and vectors $x_j^m \in E_m$. Let $(x_j^m)_{j=1}^\infty \in X_m(E_m)$, $m = 1, \ldots, n$, be given. By (b) we have $(A(x_j^1, \ldots, x_j^n))_{j=1}^\infty \in W(F)$.\\
(i) For $X_m < Z_m$ we have $\displaystyle\lim_k \|(x_j^m)_{j=k}^\infty\|_{X_m(E)} = 0$, $m = 1, \ldots, n$. Taking the supremum over $l > k$ for a fixed $k$ in (\ref{eqeq}), as each $Z_m$ and $W$ are finitely determined we obtain
\begin{align*}\left\|(A(x_j^1, \ldots, x_j^n))_{j=k}^\infty\right\|_{W(F)} & \leq C \cdot \prod_{m=1}^n \sup_l\left\|(x_j^m)_{j=k}^l\right\|_{Z_m(E_m)} =  C \cdot \prod_{m=1}^n \left\|(x_j^m)_{j=k}^\infty\right\|_{Z_m(E_m)},
\end{align*}
for every $k$. Taking now the limit for $k \longrightarrow +\infty$ we get  $\displaystyle\lim_k \|(A(x_j^1, \ldots, x_j^n))_{j=k}^\infty\|_{W(F)} = 0$. Since $Y < W$, it follows that $(A(x_j^1, \ldots, x_j^n))_{j=1}^\infty \in Y(F)$.\\
(ii) For $X_m \prec Z_m$ we have $\displaystyle\lim_{k,l} \|(x_j^m)_{j=k}^l\|_{X_m(E)} = 0$, $m = 1, \ldots, n$. Taking the limit for $k,l \longrightarrow + \infty$ in (\ref{eqeq}) we conclude that $\displaystyle\lim_{k,l} \|(A(x_j^1, \ldots, x_j^n))_{j=k}^l\|_{W(F)} = 0$. Since $Y \prec W$, it follows that $(A(x_j^1, \ldots, x_j^n))_{j=1}^\infty \in Y(F)$.\\
(iii) As $X_m < Z_m$ for $m = 1, \ldots, n$, and each $Z_m$ is finitely determined, it is easy to see that taking the limit for $k,l \longrightarrow + \infty$ in (\ref{eqeq}) the conclusion follows as in (ii).

The equality of the norms is immediate from (\ref{eqnorma}).
\end{proof}

\begin{example}\rm \label{erro} According to \cite[p.\,234]{djt}, the following conditions are equivalent for a linear operator $u \in {\cal L}(E;F)$ (the operator is called {\it almost summing} in this case):\\
(a) $(u(x_j))_{j=1}^\infty \in {\rm Rad}(F)$ whenever $(x_j)_{j=1}^\infty \in \ell_2^w(E)$.\\
(b) There is a constant $C> 0$ such that
\begin{equation}
\left\|(u(x_j))_{j=1}^k\right\|_{{\rm Rad}(F)}\leq C \cdot \left\|(x_j)_{j=1}^k\right\|_{w,2},\label{almostsumming}
\end{equation}
for all finite sequences $x_1, \ldots, x_k \in E$.

As noted in \cite{nachr}, these conditions are {\it not} always equivalent. They are equivalent if $F$ does not contain a copy of $c_0$ (cf. Remark \ref{remark1}(a)).  The point is that ${\rm Rad}(\cdot)$ is not finitely determined. This imprecision has caused a lot of trouble in the study of linear and non-linear almost summing operators, e.g., the part on almost summing operators in \cite[Section 4]{diana} is not correct.  As usual, it is the inequality that is used in the computations, so it has been taken for grant that almost summing operators are defined by inequality (\ref{almostsumming}). Following this understanding, the classes of almost summing linear and multilinear operators and their relatives have been extensively studied (see, e.g., \cite{nachr, botelhobraunssjunek, bukranz, danieljoilson, danielmarcela} and a very recent contribution in \cite{popaarchivnovo}) using inequality (\ref{almostsumming}) or its multilinear version as definition.
%
As to the transformation of vector-valued sequences, since $\ell_2^u(\cdot) < \ell_2^w(\cdot)$ and ${\rm Rad}(\cdot) \prec {\rm RAD}(\cdot)$, by Corollary \ref{cornew}(iii) the following are equivalent for an $n$-linear operator $A \in  {\cal L}(E_1, \ldots, E_n;F)$:\\
(i) $A$ is almost summing.\\
(ii) $(A(x_j^1, \ldots, x_j^n))_{j=1}^\infty \in {\rm Rad}(F)$ whenever $(x_j^m)_{j=1}^\infty \in \ell_2^u(E_m)$, $m = 1, \ldots, n$.\\
(iii) $(A(x_j^1, \ldots, x_j^n))_{j=1}^\infty \in {\rm RAD}(F)$ whenever $(x_j^m)_{j=1}^\infty \in \ell_2^w(E_m)$, $m = 1, \ldots, n$.

In particular, the case $n = 1$ settles the imprecision in the original definition of almost summing linear operators. Observe that $\ell_2^w(\cdot)$ and ${\rm RAD}(\cdot)$ are finitely determined whereas $\ell_2^u(\cdot)$ and ${\rm Rad}(\cdot)$ are not. The equivalence (i) $\Leftrightarrow$ (ii) goes back to \cite[Theorem 3.1]{nachr} and \cite[Theorem 3.3]{botelhobraunssjunek}.
\end{example}

\section{Banach multi-ideals}
In this section we study the classes of linear and multilinear operators satisfying the equivalent conditions of Proposition \ref{firstprop}. As before, for the linear case just take $n = 1$.

\begin{definition}\rm Let $n \in \mathbb{N}$ and $X_1, \ldots, X_n,Y$ be sequence classes. A multilinear operator $A \in {\cal L}(E_1, \ldots, E_n;F)$ is {\it $(X_1,\ldots, X_n;Y)$-summing} if the equivalent conditions of Proposition \ref{firstprop} hold for $A$, that is, $(A(x_j^1, \ldots, x_j^n))_{j=1}^\infty \in Y(F)$ whenever $(x_j^m)_{j=1}^\infty \in X_m(E_m)$, $m = 1, \ldots, n$. In this case we write $A \in {\cal L}_{X_1,\ldots,X_n;Y}(E_1, \ldots, E_n;F)$ and define
$$\|A\|_{X_1, \ldots, X_n;Y} = \|\widehat{A}\|_{{\cal L}(X_1(E_1), \ldots, X_n(E_n); Y(F))}.$$
If $X_1 = \cdots =X_n=X$ we simply write ${\cal L}_{n,X;Y}$ and $\|\cdot\|_{n,X;Y}$. In the linear case, that is $n=1$, we write ${\cal L}_{X;Y}$ and $\|\cdot\|_{X;Y}$.  
\end{definition}

By ${\cal L}^n$ we mean the class of all continuous $n$-linear operators with the usual sup norm. For $\frac{1}{p} \leq \frac{1}{p_1} + \cdots + \frac{1}{p_n}$, H\"older's inequality gives
\begin{equation}{\cal L}_{\ell_{p_1}(\cdot),\ldots, \ell_{p_n}(\cdot); \ell_p(\cdot) } \stackrel{1}{=} {\cal L}^n,\label{holder}
\end{equation}
where $\stackrel{1}{=}$ means equality of norms.



The following concept is crucial for ${\cal L}_{X_1,\ldots,X_n;Y}$ to be an operator ideal or a multi-ideal. 

\begin{definition}\rm A sequence class $X$ is said to be {\it linearly stable} if ${\cal L}_{X;X}(E;F) \stackrel{1}{=} {\cal L}(E;F)$ for all Banach spaces $E$ and $F$, that is, for every $u \in {\cal L}(E;F)$, $(u(x_j))_{j=1}^\infty \in X(F)$ whenever $(x_j)_{j=1}^\infty \in X(E)$ and $\|\widehat{u}\colon X(E) \longrightarrow X(F)\| = \|u\|  $.
\end{definition}

\begin{example}\rm
(a) All sequence classes listed in Example \ref{firstex} are linearly stable.\\
(b) The sequence class
$$E \in BAN \mapsto X(E) =\left\{\begin{array}{cl} c_0(E), & {\rm ~if~} E {\rm~is~reflexive} \\ \ell_\infty(E), & {\rm ~otherwise},  \end{array} \right. $$
fails to be linearly stable (consider, for example, the inclusion $\ell_1 \hookrightarrow \ell_2$). Linear stability has the extra purpose of avoiding such artificial constructions.
\end{example}

The following (expected) properties shall be useful later:

\begin{lemma}\label{lema1}Let $n \in \mathbb{N}$ and $X, X_1, \ldots, X_n,Y$ be  linearly stable sequence classes.\\
{\rm (a)} For every Banach space $E$,
$$\|(0, \ldots, 0,x,0,0,0, \ldots)\|_{X(E)} = \|x\|_E$$ regardless of the vector $x \in E$ and the position $x$ appears in the sequence.\\
{\rm (b)} For $k \in \mathbb{N}$ and sequences $(x_1^m)_{m=1}^\infty, \ldots, (x_k^m)_{m=1}^\infty$ in $E$, if $\displaystyle\lim_m x_j^m = x_j \in E$ for $j=1,\ldots,k$, then $$\lim_m \,(x_1^m,\ldots,x_k^m,0,0,\ldots)= (x_1,\ldots,x_k,0,0,\ldots)\  {\rm in}\  X(E).$$
{\rm (c)} $\|A\| \leq \|A\|_{X_1,\ldots, X_n;Y}$ for every $A \in {\cal L}_{X_1, \ldots, X_n;Y}(E_1, \ldots, E_n;F)$.
\end{lemma}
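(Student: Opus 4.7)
The plan is to prove (a) first by a direct appeal to linear stability, and then derive (b) and (c) as almost mechanical consequences.

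For part (a), I would apply linear stability to the rank-one operator $u\colon \mathbb{K} \longrightarrow E$ defined by $u(\lambda) = \lambda x$, which has $\|u\| = \|x\|_E$. Linear stability gives $\|\widehat{u}\colon X(\mathbb{K}) \longrightarrow X(E)\| = \|u\| = \|x\|_E$, and applying $\widehat{u}$ to the unit vector $e_j \in X(\mathbb{K})$ yields the sequence $(0,\ldots,0,x,0,\ldots)$ with $x$ in the $j$-th position. Since $\|e_j\|_{X(\mathbb{K})} = 1$, this gives $\|(0,\ldots,0,x,0,\ldots)\|_{X(E)} \leq \|x\|_E$. The reverse inequality is immediate from the built-in embedding $X(E) \stackrel{1}{\hookrightarrow} \ell_\infty(E)$, which forces $\|(0,\ldots,0,x,0,\ldots)\|_{X(E)} \geq \|(0,\ldots,0,x,0,\ldots)\|_{\ell_\infty(E)} = \|x\|_E$.

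For part (b), I would write the difference
$$(x_1^m,\ldots,x_k^m,0,0,\ldots) - (x_1,\ldots,x_k,0,0,\ldots) = \sum_{j=1}^k (0,\ldots,0,x_j^m - x_j,0,\ldots)$$
(with $x_j^m - x_j$ in position $j$), invoke part (a) for each summand, and apply the triangle inequality in $X(E)$ to conclude
$$\bigl\|(x_1^m,\ldots,x_k^m,0,\ldots) - (x_1,\ldots,x_k,0,\ldots)\bigr\|_{X(E)} \leq \sum_{j=1}^k \|x_j^m - x_j\|_E \longrightarrow 0.$$

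For part (c), take arbitrary $x^m \in B_{E_m}$ and form the one-term sequences $(x^m,0,0,\ldots) \in X_m(E_m)$. By (a) applied to each $X_m$, these have $X_m(E_m)$-norm equal to $\|x^m\|_{E_m} \leq 1$. The definition of $\widehat{A}$ yields $\widehat{A}((x^1,0,\ldots),\ldots,(x^n,0,\ldots)) = (A(x^1,\ldots,x^n),0,0,\ldots)$, and by (a) applied to $Y$ its $Y(F)$-norm equals $\|A(x^1,\ldots,x^n)\|_F$. Thus
$$\|A(x^1,\ldots,x^n)\|_F \leq \|\widehat{A}\| \cdot \prod_{m=1}^n \|x^m\|_{X_m(E_m)} \leq \|A\|_{X_1,\ldots,X_n;Y},$$
and taking the supremum over unit vectors delivers $\|A\| \leq \|A\|_{X_1,\ldots,X_n;Y}$.

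There is no real obstacle here: once (a) is in hand, parts (b) and (c) are routine, and (a) itself is a clean one-line application of linear stability to a rank-one operator, combined with the hypothesis $\|e_j\|_{X(\mathbb{K})}=1$ which handles all positions $j$ uniformly. The only point worth flagging is that the proof exploits both ingredients of the sequence-class axioms -- the normalization $\|e_j\|_{X(\mathbb{K})}=1$ and the embedding into $\ell_\infty$ -- showing that these innocuous-looking requirements are exactly what is needed to make evaluations at single vectors behave isometrically.
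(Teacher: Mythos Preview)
Your proof is correct and matches the paper's argument essentially line for line: part (a) via the rank-one operator $\lambda \mapsto \lambda x$ combined with $\|e_j\|_{X(\mathbb{K})}=1$ and the embedding $X(E)\stackrel{1}{\hookrightarrow}\ell_\infty(E)$, part (b) by decomposing into single-coordinate sequences and invoking (a), and part (c) by evaluating $\widehat{A}$ on one-term sequences. The only cosmetic difference is that in (b) you write the difference and bound it directly, while the paper phrases the same computation as an interchange of a finite sum with a limit.
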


\begin{proof} (a) Consider the continuous linear operator
$$u_x \colon \mathbb{K} \longrightarrow E~,~u_x(\lambda) = \lambda x. $$
Letting $j \in \mathbb{N}$ be such that $x$ appears in the $j$-th position in the sequence, 
\begin{align*} \|x\|_E \leq \|(0, \ldots, 0,x,0,0, \ldots )\|_{X(E)} &= \|\widehat{u_x}(e_j)\|_{X(E)}
\leq \|\widehat{u_x}\|_{{\cal L}(X(\mathbb{K}), X(E))}\cdot \|e_j\|_{X(\mathbb{K})}\\&= \|u_x\|_{{\cal L}(\mathbb{K},E)}= \|x\|_E,
\end{align*}
where the first inequality is a consequence of the condition $X(E) \stackrel{1}{\hookrightarrow}  \ell_\infty(E)$.\\
(b) From (a),
\begin{align*} (x_1,\ldots,x_k,0,0,\ldots) &= \sum_{j=1}^k (0,\ldots,0,x_j,0,\ldots) \stackrel{(a)}{=} \sum_{j=1}^k \lim_m\,(0,\ldots,0,x_j^m,0,\ldots)\\
& =  \lim_m\sum_{j=1}^k (0,\ldots,0,x_j^m,0,\ldots) = \lim_m \,(x_1^m,\ldots,x_k^m,0,0,\ldots).
\end{align*}
(c) For all $x_1 \in E_1, \ldots, x_n \in E_n$, by (a) we have
\begin{align*} \|A(x_1, \ldots,& x_n)\|_F = \|(A(x_1, \ldots, x_n), 0,0, \ldots)\|_{Y(F)}\\
&= \|\widehat{A}((x_1,0,0,\ldots),\ldots, (x_n,0,0,\ldots))\|_{Y(F)} \\
&\leq \|\widehat{A}\|_{{\cal L}(X_1(E_1), \ldots, X_n(E_n); Y(F))}\cdot \|(x_1, 0,0,\ldots)\|_{X_1(E_1)} \cdots  \|(x_n, 0,0,\ldots)\|_{X_n(E_n)}\\
& = \|A\|_{X_1,\ldots, X_n;Y}\cdot \|x_1\|_{E_1} \cdots \|x_n\|_{E_n},
\end{align*}
from which the desired inequality follows.
\end{proof}

The notion of operator ideals was systematized by Pietsch \cite{livropietsch} and  multi-ideals were introduced by Pietsch \cite{pietsch} and has been developed by several authors (for recent contributions, see, e.g., \cite{jamilson, galicer, danieljoilson, danieljoilsonmonat, popa, diana}).

\begin{definition}\rm Let $n\in \mathbb{N}$. A {\it Banach ideal of $n$-linear operators} is a pair $({\cal M}_n, \|\cdot\|_{{\cal M}_n})$ where ${\cal M}_n$ is as subclass of the class of all $n$-linear operators between Banach spaces and $\|\cdot\|_{{\cal M}_n} \colon {\cal M}_n\longrightarrow \mathbb{R}$ is a function such that, for all Banach spaces $E_1, \ldots, E_n,F$, the component
$${\cal M}_n(E_1, \ldots, E_n;F) := {\cal L}(E_1, \ldots, E_n;F)\cap {\cal M}_n $$
is a linear subspace of ${\cal L}(E_1, \ldots, E_n;F)$ on which $\|\cdot\|_{{\cal M}_n}$ is a complete norm and\\
(i) ${\cal M}_n(E_1, \ldots, E_n;F)$ contains the $n$-linear operators of finite type and $$\left\|I_n \colon \mathbb{K}^n \longrightarrow \mathbb{K}~,~I_n(\lambda_1, \ldots, \lambda_n) = \lambda_1 \cdots \lambda_n \right\|_{{\cal M}_n} =1. $$
(ii) If $A \in {\cal M}_n(E_1, \ldots, E_n;F)$, $u_m \in {\cal L}(G_m,E_m)$, $m = 1, \ldots, n$, and $v \in {\cal L}(F;H)$, then $v \circ A \circ (u_1, \ldots, u_n) \in {\cal M}_n(G_1, \ldots, G_n;H)$ and
$$\|v \circ A \circ (u_1, \ldots, u_n)\|_{{\cal M}_n} \leq \|v\| \cdot \|A\|_{{\cal M}_n} \cdot \|u_1\| \cdots \|u_n\|.  $$
The case $n = 1$ gives the classical notion of Banach operator ideals.
\end{definition}

Given sequence classes $X_1, \ldots, X_n, Y$, we say that $X_1(\mathbb{K}) \cdots X_n(\mathbb{K}) \stackrel{1}{\hookrightarrow} Y(\mathbb{K})$ if \\$(\lambda_j^1 \cdots \lambda_j^n)_{j=1}^\infty \in Y(\mathbb{K})$ and
$$\left\|(\lambda_j^1 \cdots \lambda_j^n)_{j=1}^\infty \right\|_{Y(\mathbb{K})}\leq  \prod_{m=1}^n \left\|(\lambda_j^m)_{j=1}^\infty \right\|_{X_m(\mathbb{K})}, $$
whenever $(\lambda_j^m)_{j=1}^\infty \in X_m(\mathbb{K}), m = 1, \ldots, n.$

\begin{theorem}\label{propideal} Let $n\in \mathbb{N}$ and $X_1, \ldots, X_n, Y$ be linearly stable sequence classes such that $X_1(\mathbb{K}) \cdots X_n(\mathbb{K}) \stackrel{1}{\hookrightarrow} Y(\mathbb{K})$. Then $({\cal L}_{X_1, \ldots, X_n;Y}, \|\cdot\|_{X_1, \ldots, X_n;Y})$ is a Banach ideal of $n$-linear operators.
\end{theorem}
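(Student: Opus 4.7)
The plan is to verify in turn each of the axioms in the definition of a Banach ideal of $n$-linear operators, working through the identification $A \leftrightarrow \widehat{A}$ provided by Proposition \ref{firstprop} and using Lemma \ref{lema1} to relate the summing norm to the usual one.

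First, linearity of the component ${\cal L}_{X_1,\ldots,X_n;Y}(E_1,\ldots,E_n;F)$ is immediate since $\widehat{\alpha A+B}=\alpha\widehat A+\widehat B$ via coordinatewise operations, and $\|\cdot\|_{X_1,\ldots,X_n;Y}=\|\widehat{\cdot}\|$ inherits its norm axioms from the ambient operator norm; positivity uses Lemma \ref{lema1}(c) in the form $\|A\|\le \|A\|_{X_1,\ldots,X_n;Y}$. For the ideal property I would take $A\in{\cal L}_{X_1,\ldots,X_n;Y}(E_1,\ldots,E_n;F)$, $u_m\in{\cal L}(G_m;E_m)$ and $v\in{\cal L}(F;H)$, and chase a sequence through $v\circ A\circ(u_1,\ldots,u_n)$ in three stages: linear stability of each $X_m$ pushes $(y_j^m)_{j=1}^\infty\in X_m(G_m)$ to $(u_m(y_j^m))_{j=1}^\infty\in X_m(E_m)$ with norm multiplied by $\|u_m\|$, the hypothesis on $A$ places the resulting sequence in $Y(F)$ with norm multiplied by $\|A\|_{X_1,\ldots,X_n;Y}$, and linear stability of $Y$ pushes through $v$ with norm multiplied by $\|v\|$; concatenating these bounds yields the ideal inequality.

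For the finite-type axiom I would reduce to a generator $\varphi_1\otimes\cdots\otimes\varphi_n\otimes b$ by linearity, and write $A(x_j^1,\ldots,x_j^n)=\varphi_1(x_j^1)\cdots\varphi_n(x_j^n)\,b$. Linear stability of each $X_m$ gives $(\varphi_m(x_j^m))_{j=1}^\infty\in X_m(\mathbb{K})$, the hypothesis $X_1(\mathbb{K})\cdots X_n(\mathbb{K})\stackrel{1}{\hookrightarrow} Y(\mathbb{K})$ gives the product sequence in $Y(\mathbb{K})$, and linear stability of $Y$ applied to the linear map $\lambda\mapsto \lambda b$ produces the final sequence in $Y(F)$. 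The same chain applied to $I_n\colon\mathbb{K}^n\to\mathbb{K}$, $I_n(\lambda_1,\ldots,\lambda_n)=\lambda_1\cdots\lambda_n$, yields $\|I_n\|_{X_1,\ldots,X_n;Y}\le 1$, while Lemma \ref{lema1}(c) combined with $\|I_n\|=1$ gives the reverse inequality.

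The hardest step, and the one demanding care, is completeness of $\|\cdot\|_{X_1,\ldots,X_n;Y}$. For a Cauchy sequence $(A_k)$ in this norm, Lemma \ref{lema1}(c) makes it Cauchy also in the sup norm, so $A_k\to A$ in ${\cal L}(E_1,\ldots,E_n;F)$; separately $(\widehat{A_k})$ is Cauchy in the Banach space ${\cal L}(X_1(E_1),\ldots,X_n(E_n);Y(F))$ and hence converges to some continuous $n$-linear $T$. The crucial identification $T=\widehat A$ exploits the embedding $Y(F)\stackrel{1}{\hookrightarrow}\ell_\infty(F)$, which makes each coordinate projection $Y(F)\to F$ continuous: for every fixed index $j$ and sequences $(x_j^m)_{j=1}^\infty\in X_m(E_m)$, the $j$-th coordinate of $T\bigl((x_j^1)_{j=1}^\infty,\ldots,(x_j^n)_{j=1}^\infty\bigr)$ equals $\lim_k A_k(x_j^1,\ldots,x_j^n)=A(x_j^1,\ldots,x_j^n)$. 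This simultaneously shows $(A(x_j^1,\ldots,x_j^n))_{j=1}^\infty\in Y(F)$ (so $A\in {\cal L}_{X_1,\ldots,X_n;Y}$) and $T=\widehat A$, whence $\|A_k-A\|_{X_1,\ldots,X_n;Y}=\|\widehat{A_k}-T\|\to 0$.
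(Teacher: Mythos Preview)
Your proposal is correct and follows essentially the same route as the paper: linearity via $\widehat{A+\lambda B}=\widehat A+\lambda\widehat B$, finite-type operators via linear stability of each $X_m$ and of $Y$ together with $X_1(\mathbb{K})\cdots X_n(\mathbb{K})\stackrel{1}{\hookrightarrow}Y(\mathbb{K})$, the ideal inequality via $(v\circ A\circ(u_1,\ldots,u_n))^{\wedge}=\widehat v\circ\widehat A\circ(\widehat{u_1},\ldots,\widehat{u_n})$, and completeness by passing to a limit $T$ of $(\widehat{A_k})$ and identifying $T=\widehat A$ through coordinatewise convergence (the paper routes this last identification through the auxiliary maps $\widetilde{A_k},\widetilde{A}$ on $\ell_\infty(\cdot)$, which amounts to the same thing). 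The only cosmetic differences are that you invoke Lemma \ref{lema1}(c) both for positivity of the norm and for the inequality $\|I_n\|_{X_1,\ldots,X_n;Y}\ge 1$, whereas the paper argues positivity from $c_{00}(\cdot)\subseteq X_m(\cdot)$ and obtains $\|I_n\|_{X_1,\ldots,X_n;Y}\ge 1$ directly from $\widehat{I_n}(e_1,\ldots,e_1)=e_1$; these are interchangeable.
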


\begin{proof} The equality $\widehat{A + \lambda B} = \widehat{A} + \lambda \widehat{B}$ shows that ${\cal L}_{X_1, \ldots, X_n;Y}(E_1, \ldots, E_n;F)$ is a linear subspace of  ${\cal L}(E_1, \ldots, E_n;F)$ and proves the norm axioms for  $ \|\cdot\|_{X_1, \ldots, X_n;Y}$, except that $\|A\|_{X_1, \ldots, X_n;Y} =0 \Longrightarrow A = 0$. This remaining axiom follows from $c_{00}(\cdot) \subseteq X_m(\cdot)$.

To prove that ${\cal L}_{X_1, \ldots, X_n;Y}$ contains the multilinear operators of finite type, it is enough to prove that, if $\varphi_j \in E_j'$, $j = 1, \ldots, n$, and $b \in F$, then $\varphi_1 \otimes \cdots \otimes \varphi_n \otimes b$ is $(X_1, \ldots, X_n;Y)$-summing. Given sequences $(x_j^m)_{j=1}^\infty \in X_m(E_m)$, $m = 1, \ldots,n$, since each $X_m$ is linearly stable, we have $(\varphi_m(x_j^m))_{j=1}^\infty \in X_m(\mathbb{K}).$ Since $X_1(\mathbb{K}) \cdots X_n(\mathbb{K}) \stackrel{1}{\hookrightarrow} Y(\mathbb{K})$, 
$$\left(\varphi_1(x_j^1) \cdots \varphi_n(x_j^n) \right)_{j=1}^\infty \in Y(\mathbb{K}).$$
Considering the linear operator $\lambda \in \mathbb{K} \mapsto \lambda b \in F$ and using the linear stability of $Y$, we get
$$(\varphi_1 \otimes \cdots \otimes \varphi_n \otimes b(x_j^1, \ldots, x_j^n))_{j=1}^\infty=\left(\varphi_1(x_j^1) \cdots \varphi_n(x_j^n)b \right)_{j=1}^\infty \in Y(F), $$
proving that $\varphi_1 \otimes \cdots \otimes \varphi_n \otimes b \in {\cal L}_{X_1, \ldots, X_n;Y}(E_1, \ldots, E_n;F)$.

The assumption $X_1(\mathbb{K}) \cdots X_n(\mathbb{K})\stackrel{1}{\hookrightarrow} Y(\mathbb{K})$ also gives
$$\|I_n\|_{X_1, \ldots, X_n;Y} = \|\widehat{I_n} \colon X_1(\mathbb{K}) \times \cdots \times X_n(\mathbb{K}) \longrightarrow Y(\mathbb{K})\| \leq 1.$$
Since $\|e_1\|_{X_m(\mathbb{K})}= \|e_1\|_{Y(\mathbb{K})}$ for $m = 1, \ldots, n$, and $\widehat{I_n}(e_1, \ldots, e_1) = e_1$, we get $\|I_n\|_{X_1, \ldots, X_n;Y} =1$.

To prove the $n$-ideal property, let $A \in {\cal L}_{X_1, \ldots, X_n;Y}(E_1, \ldots, E_n;F)$, $u_m \in {\cal L}(G_m;E_m)$, $m = 1, \ldots,n$, and $v \in {\cal L}(F;H)$ be given. The linear stability of each $X_m$ and $Y$ together with the $(X_1, \ldots, X_n;Y)$-summability of $A$ give the $(X_1, \ldots, X_n;Y)$-summability of $v \circ A \circ (u_1, \ldots, u_n)$. Moreover,
\begin{align*}\|v \circ A \circ (u_1,& \ldots, u_n) \|_{X_1, \ldots, X_n;Y} \\&= \|({v \circ A \circ (u_1, \ldots, u_n)})^{\wedge}\colon X_1(G_1) \times \cdots \times X_n(G_n) \rightarrow Y(H) \|\\
& = \|{\widehat{v} \circ \widehat{A} \circ (\widehat{u_1}, \ldots, \widehat{u_n})}\colon X_1(G_1) \times \cdots \times X_n(G_n) \rightarrow Y(H) \|\\
& \leq \|\widehat{v}\| \cdot \|\widehat{A}\| \cdot \|\widehat{u_1}\| \cdots \|\widehat{u_n}\| = \|v\| \cdot \|A\|_{X_1, \ldots, X_n;Y} \cdot \|u_1\| \cdots \|u_n\|,
\end{align*}
where $\widehat{v} \colon Y(F) \rightarrow Y(G)$, $\widehat{A} \colon X_1(E_1) \times \cdots \times X_n(E_n) \rightarrow Y(F)$ and $\widehat{u_m} \colon X(G_m) \rightarrow X(E_m)$.

Let $(A_k)_{k=1}^\infty$ be a Cauchy sequence in ${\cal L}_{X_1, \ldots, X_n;Y}(E_1, \ldots, E_n;F)$. From Lemma \ref{lema1}(c) we know that $(A_k)_{k=1}^\infty$ is Cauchy in ${\cal L}(E_1, \ldots, E_n;F)$ too, so there is $A \in {\cal L}(E_1, \ldots, E_n;F)$ such that $A_k \stackrel{\|\cdot\|}{\longrightarrow} A$. The induced maps
$$\widetilde{A_k}, \widetilde{A} \colon \ell_\infty(E_1) \times \cdots \times \ell_\infty(E_n) \longrightarrow \ell_\infty(F), $$
$k \in \mathbb{N}$, are well-defined, $n$-linear and continuous. From
$$\|\widetilde{A_k} - \widetilde{A}\|_{{\cal L}(\ell_\infty(E_1), \ldots , \ell_\infty(E_n); \ell_\infty(F))} = \|\widetilde{A_k - A}\|_{{\cal L}(\ell_\infty(E_1), \ldots , \ell_\infty(E_n); \ell_\infty(F))} = \|A_k - A\|_{{\cal L}(E_1, \ldots, E_n;F)}, $$
we conclude that $\widetilde{A_k} \longrightarrow \widetilde{A}$ in ${\cal L}(\ell_\infty(E_1), \ldots , \ell_\infty(E_n); \ell_\infty(F)). $ For each $k \in \mathbb{N}$, since $A_k$ is $(X_1, \ldots, X_n;Y)$-summing, the map
$$\widehat{A_k} \colon X_1(E_1) \times \cdots \times X_n(E_n) \longrightarrow Y(F), $$
is well-defined, $n$-linear and $\|\widehat{A_k}\|_{{\cal L}(X_1(E_1), \ldots , X_n(E_n); Y(F))} = \|A_k\|_{X_1, \ldots, X_n;Y}.$ Hence $(\widehat{A_k})_{k=1}^\infty$ is a Cauchy sequence in the Banach space ${\cal L}(X_1(E_1), \ldots , X_n(E_n); Y(F))$ (remember that $Y(F)$ is a Banach space). Let $T \in {\cal L}(X_1(E_1), \ldots , X_n(E_n); Y(F))$ be such that $\widehat{A_k} \longrightarrow T$ in ${\cal L}(X_1(E_1), \ldots , X_n(E_n); Y(F))$. Let $(x_j^m)_{j=1}^\infty \in X_m(E_m)$, $m = 1, \ldots, n$, be given. Of course
$$\widetilde{A}\left((x_j^1)_{j=1}^\infty, \ldots, (x_j^n)_{j=1}^\infty \right)= \widehat{A}\left((x_j^1)_{j=1}^\infty, \ldots, (x_j^n)_{j=1}^\infty \right).  $$
On the one hand, from $\widetilde{A_k} \longrightarrow \widetilde{A}$ we have $$\widetilde{A_k}\left((x_j^1)_{j=1}^\infty, \ldots, (x_j^n)_{j=1}^\infty  \right) \longrightarrow \widetilde{A}\left((x_j^1)_{j=1}^\infty, \ldots, (x_j^n)_{j=1}^\infty  \right) {\rm ~in~} \ell_\infty(F).$$
On the other hand, from $\widehat{A_k} \longrightarrow T$ we have
$$\widehat{A_k}\left((x_j^1)_{j=1}^\infty, \ldots, (x_j^n)_{j=1}^\infty  \right) \longrightarrow T\left((x_j^1)_{j=1}^\infty, \ldots, (x_j^n)_{j=1}^\infty  \right) {\rm ~in~} Y(F), $$
and since $Y(F) \hookrightarrow \ell_\infty(F)$ is a norm 1 embedding, it follows that
$$\widehat{A_k}\left((x_j^1)_{j=1}^\infty, \ldots, (x_j^n)_{j=1}^\infty  \right) \longrightarrow T\left((x_j^1)_{j=1}^\infty, \ldots, (x_j^n)_{j=1}^\infty  \right) {\rm ~in~} \ell_\infty(F). $$
This gives $\widetilde{A}\left((x_j^1)_{j=1}^\infty, \ldots, (x_j^n)_{j=1}^\infty  \right) = T\left((x_j^1)_{j=1}^\infty, \ldots, (x_j^n)_{j=1}^\infty  \right)\in Y(F) $, proving that $A$ is $(X_1, \ldots, X_n;Y)$-summing. Finally,
\begin{align*}\|A_k - A\|_{X_1,\ldots, X_n;Y}& = \|\widehat{A_k - A}\|_{{\cal L}(X_1(E_1), \ldots , X_n(E_n); Y(F))} =  \|\widehat{A_k} -\widehat{ A}\|_{{\cal L}(X_1(E_1), \ldots , X_n(E_n); Y(F))}\\
&= \|\widehat{A_k} -T\|_{{\cal L}(X_1(E_1), \ldots , X_n(E_n); Y(F))} \longrightarrow 0,
\end{align*}
what proves that $A_k \longrightarrow A$ in ${\cal L}_{X_1, \ldots, X_n;Y}(E_1, \ldots, E_n;F)$.
\end{proof}

\begin{remark}\label{errodiana}\rm A related result on the generation of multilinear ideals based on the transformation of vector-valued sequences can be found in \cite[Theorem 3]{diana}. An important assumption is missing there: for \cite[Theorem 3]{diana} to be true, one should assume -- as we have just done in Theorem \ref{propideal} -- that the underlying sequence spaces are linearly stable.
\end{remark}

\begin{example}\label{exam} \rm Important ideals of linear and multilinear operators that have been studied individually in the literature are particular cases of the ideals generated by Theorem \ref{propideal}. In Example \ref{erro} we saw that, for every $n \in \mathbb{N}$, the ideal of almost summing $n$-linear operators coincides with
${\cal L}_{n,\ell_2^u(\cdot); {\rm Rad}(\cdot)} = {\cal L}_{n,\ell_2^w(\cdot); {\rm RAD}(\cdot)}.  $ This ideal is generated by Theorem \ref{propideal} because $\ell_{2} \cdots \ell_{2} \stackrel{1}{\hookrightarrow} \ell_2 = {\rm Rad}(\mathbb{K})$.
We add just three illustrative examples:\\
(a) The class ${\cal L}^n_{wsc,0}$ of $n$-linear operators that are weakly sequentially continuous at the origin (a more appropriate term would be {\it sequentially weak-to-norm continuous at the origin}), has played an important role in the study of spaces of multilinear operators and homogeneous polynomials. It has proved to be very useful in the study of the reflexivity of spaces of multilinear and polynomial operators (see, e.g., \cite{aad, arondineen, livrodineen, valdivia}). Considering the linearly stable sequence classes $c_0^w(\cdot)$ and $c_0(\cdot)$, as $c_0 \stackrel{(n)}{\cdots} c_0 \stackrel{1}{\hookrightarrow} c_0$ for every $n$, we have
$${\cal L}^n_{wsc,0}= {\cal L}_{n,c_0^w(\cdot);c_0(\cdot)}.$$
In the linear case, we recover the closed ideal of completely continuous operators $\cal CC$, that is, ${\cal L}^1_{wsc,0}= {\cal CC}$.\\
(b) Let $p,p_1, \ldots, p_n \geq 1$ be such that $\frac1p = \frac{1}{p_1} + \cdots +\frac{1}{p_n}$. The class ${\cal L}_{d;p_1, \ldots,p_n}$ of $(p_1, \ldots, p_n)$-dominated multilinear operators is the oldest and one of the most studied multilinear/
polynomial generalizations of the ideal of absolutely $p$-summing linear operators. It was introduced by Pietsch \cite{pietsch} and its theory has been developed by several authors, e.g. \cite{boperu, cg, david, popa}. Considering the linearly stable sequence classes $\ell_p(\cdot)$ and $\ell_{p_j}^w(\cdot)$, as $\ell_{p_1} \cdots \ell_{p_n} \stackrel{1}{\hookrightarrow} \ell_p$, we have
$${\cal L}_{d;p_1, \ldots,p_n} = {\cal L}_{\ell_{p_1}^w(\cdot), \ldots,\ell_{p_n}^w(\cdot); \ell_p(\cdot) }. $$
Developing the theory considering quasi-Banach sequence spaces in the sense of Remark \ref{remark1} {\rm (b)}, the general case of dominated multilinear operators, that is, ${\cal L}_{d;p_1, \ldots,p_n}$ for $p,p_1, \ldots, p_n > 0$ with $\frac1p = \frac{1}{p_1} + \cdots +\frac{1}{p_n}$, would be accomodated as well.\\
(c) Reasoning as in \cite[Proposition 3.2]{jamilson}, it is easy to see that a linear or multilinear operator is Cohen almost summing in the sense of \cite{bu12} if and only if it is $({\rm Rad}(\cdot), \ldots, {\rm Rad}(\cdot); \ell_2\langle\,\cdot\,\rangle)$-summing.
\end{example}
%

Let us see that the ideals generated by Theorem \ref{propideal} enjoy a distinguished property. Let $(P_m)_{m=1}^\infty$ be canonical projections associated to a Schauder basis of an infinite dimensional Banach space $E$, let $id_E$ be the identity operator on $E$ and let $\cal K$ denote the closed ideal of compact operators. Then $P_m \in {\cal K}(E;E)$ for every $m$, $P_m(x) \longrightarrow id_E(x)$ for every $x \in E$, $\sup_m \|P_m\| < +\infty$, but $id_E \notin {\cal K}(E;E)$. Next we prove that this does not occur with the ideals generated by Theorem \ref{propideal}:
%

\begin{proposition}\label{propried} Let $n \in \mathbb{N}$ and $X_1, \ldots, X_n,Y$ be finitely determined sequence classes satisfying the conditions of Theorem \ref{propideal}, let $(A_m)_{m=1}^\infty$ be a sequence in ${\cal L}_{X_1, \ldots, X_n;Y}(E_1, \ldots, E_n;F)$ and $A \in {\cal L}(E_1,\ldots,E_n;F)$. If $\sup_m \|A_m\|_{X_1, \ldots, X_n;Y} < +\infty$ and
\begin{equation*}A_m(x_1,\ldots,x_n) \longrightarrow A(x_1,\ldots,x_n)\ {\rm in}\ F {\rm ~for~every~} (x_1,\ldots,x_n) \in E_1 \times \cdots \times E_n,
\end{equation*}
then $A \in {\cal L}_{X_1, \ldots, X_n;Y}(E_1, \ldots, E_n;F)$ and $\|A\|_{X_1, \ldots, X_n;Y} \leq \sup_m \|A_m\|_{X_1, \ldots, X_n;Y}$.
\end{proposition}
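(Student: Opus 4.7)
The plan is to exploit finite determination of $X_1, \ldots, X_n, Y$ by reducing the question to an inequality about finite sequences, which then passes to a limit pointwise in~$m$. Set $C := \sup_m \|A_m\|_{X_1, \ldots, X_n; Y}$, which is finite by hypothesis. Since each $A_m$ is $(X_1, \ldots, X_n; Y)$-summing, Proposition \ref{firstprop}[(a) $\Longrightarrow$ (c)] together with (\ref{eqnorma}) yields, for every $k \in \mathbb{N}$ and all finite sequences $x_j^r \in E_r$, $j = 1, \ldots, k$, $r = 1, \ldots, n$,
$$\left\|(A_m(x_j^1, \ldots, x_j^n))_{j=1}^k\right\|_{Y(F)} \leq \|A_m\|_{X_1, \ldots, X_n;Y}\cdot \prod_{r=1}^n \left\|(x_j^r)_{j=1}^k\right\|_{X_r(E_r)} \leq C \cdot \prod_{r=1}^n \left\|(x_j^r)_{j=1}^k\right\|_{X_r(E_r)}.$$

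The crux is to let $m \to \infty$ inside the left-hand norm. By hypothesis, for each fixed $j = 1, \ldots, k$ we have $A_m(x_j^1, \ldots, x_j^n) \longrightarrow A(x_j^1, \ldots, x_j^n)$ in $F$ as $m \to \infty$. Since $Y$ is linearly stable (this is part of the hypothesis of Theorem \ref{propideal}), Lemma \ref{lema1}(b) applies to the $k$ coordinatewise-convergent vectors and gives
$$(A_m(x_j^1, \ldots, x_j^n))_{j=1}^k \xrightarrow[m\to\infty]{} (A(x_j^1, \ldots, x_j^n))_{j=1}^k \quad \text{in } Y(F),$$
where finite sequences are understood in the standard way as padded with zeros. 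Continuity of the norm in $Y(F)$ then yields
$$\left\|(A(x_j^1, \ldots, x_j^n))_{j=1}^k\right\|_{Y(F)} \leq C \cdot \prod_{r=1}^n \left\|(x_j^r)_{j=1}^k\right\|_{X_r(E_r)}.$$

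Since $X_1, \ldots, X_n, Y$ are finitely determined, Proposition \ref{firstprop}[(c) $\Longrightarrow$ (a)] concludes that $A \in {\cal L}_{X_1, \ldots, X_n; Y}(E_1, \ldots, E_n; F)$, and the norm formula (\ref{eqnorma}) delivers $\|A\|_{X_1, \ldots, X_n; Y} \leq C$, which is the desired inequality. The only non-formal step is the interchange of the limit in~$m$ with the $Y(F)$-norm of a finite sequence; this is exactly the content of Lemma \ref{lema1}(b) and is what forces linear stability of $Y$ to enter the argument. Finite determination is invoked twice — once implicitly in the inequality from Proposition \ref{firstprop}[(a) $\Longrightarrow$ (c)], where it is not actually needed, and crucially at the end to return from the finite-truncation inequality to full $(X_1, \ldots, X_n; Y)$-summability of~$A$.
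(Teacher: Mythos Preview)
Your proof is correct and follows essentially the same approach as the paper's own proof: both establish the finite-truncation inequality for $A$ by bounding $\left\|(A_m(x_j^1,\ldots,x_j^n))_{j=1}^k\right\|_{Y(F)}$ by $C\cdot\prod_r\left\|(x_j^r)_{j=1}^k\right\|_{X_r(E_r)}$, invoke Lemma~\ref{lema1}(b) to pass to the limit in $m$ inside the $Y(F)$-norm, and then appeal to Proposition~\ref{firstprop}. The paper simply compresses these steps into a single chain of equalities and inequalities; your added remarks on where finite determination and linear stability are actually used are accurate.
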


\begin{proof} Using Lemma \ref{lema1}(b), for any $k \in \mathbb{N}$,
\begin{align*}
\left\Vert (A(x_j^1, \ldots, x_j^n))_{j=1}^k\right\Vert_{Y(F)} & = \left\Vert (A(x_1^1, \ldots, x_1^n), \ldots, A(x_k^1, \ldots, x_k^n), 0, 0, \ldots)_{j=1}^k\right\Vert_{Y(F)}\\
& = \left\Vert (\lim_mA_m(x_1^1, \ldots, x_1^n), \ldots, \lim_mA_m(x_k^1, \ldots, x_k^n), 0, 0, \ldots)_{j=1}^k\right\Vert_{Y(F)}\\
& = \left\Vert \lim_m(A_m(x_1^1, \ldots, x_1^n), \ldots, A_m(x_k^1, \ldots, x_k^n), 0, 0, \ldots)_{j=1}^k\right\Vert_{Y(F)}\\
&= \lim_m \left\Vert (A_m(x_j^1, \ldots, x_j^n))_{j=1}^k\right\Vert_{Y(F)}\\
& \le \sup_m \|A_m\|_{X_1, \ldots, X_n;Y}\cdot \prod_{i=1}^n \left\Vert (x_j^i)_{j=1}^k \right\Vert_{X(E_i)}.
\end{align*}
Now the result follows from Proposition \ref{firstprop}.
\end{proof}

\begin{remark} \rm In Proposition \ref{propried}, the assumption that the sequence classes are finitely determined cannot be dropped. Indeed, denoting by $(P_m)_{m=1}^\infty$ the canonical projections associated to a Schauder basis of a reflexive infinite dimensional Banach space  $E$, we have  $(P_m)_{m=1}^\infty \subseteq {\cal CC}(E; E) = {\cal L}_{c_0^w(\cdot);c_0(\cdot)}(E; E)$ (cf. Example \ref{exam}(a)), $\sup_m \|P_m\| < +\infty$,  and $P_m(x) \longrightarrow id_{E}(x)$ for every $ x \in E$, but
 $id_{E} \notin {\cal CC}(E;E)$. This happens because $c_0^w(\cdot)$ and $c_0(\cdot)$ are not finitely determined.
\end{remark}

\section{Multilinear stability}
The purpose of this section is to establish the strong contrast between the linear and multilinear cases with respect to the preservation of vector-valued sequences. Two main differences arise: linear stability does not imply multilinear stability and the proofs of multilinear stability use {\it ad hoc} arguments to each case. As an application of our multilinear stability results we improve a result of \cite{bu12}.

\begin{definition}\rm A sequence class $X$ is said to be {\it multilinearly stable} if ${\cal L}_{n,X;X} \stackrel{1}{=} {\cal L}^n$ for every $n$, that is, $(A(x_j^1, \ldots, x_j^n))_{j=1}^\infty \in X(F)$ whenever $(x_j^m)_{j=1}^\infty \in X(E_m)$, $m = 1, \ldots, n$, and
$$\|\widehat{A}\colon X(E_1)\times \cdots \times X(E_n) \longrightarrow X(F)\| = \|A\|  $$
for all Banach spaces $E_1, \ldots, E_n$ and $F$ and operators $A \in {\cal L}(E_1, \ldots, E_n;F)$.
\end{definition}

\begin{example}\label{nex}\rm (a) It is easy to check that the sequence classes $\ell_\infty(\cdot)$, $c_0(\cdot)$ and $\ell_p(\cdot)$, $1 \leq p \leq + \infty$, are multilinearly stable (see (\ref{holder})).\\
(b) Consider the bilinear operator
$$A \colon \ell_2 \times \ell_2 \longrightarrow \ell_1~,~A\left((x_j)_{j=1}^\infty, (y_j)_{j=1}^\infty \right)= (x_jy_j)_{j=1}^\infty.$$
Since $(e_k)_{k=1}^\infty$ is weakly null in $\ell_2$ but not in $\ell_1$, the sequence class $c_0^w(\cdot)$ is not multilinearly stable.
\end{example}

We have just seen that multilinear stability does not follow from linear stability ($c_0^w(\cdot)$ is linearly stable but not multilinearly stable). Let us see a remarkable example concerning summable sequences: on the one hand, the sequence classes $\ell_p^w(\cdot)$ and $\ell_p^u(\cdot)$ are linearly stable for every $1 \leq p < + \infty$. On the other hand:

\begin{theorem}\label{firsttheo} Let $1 \leq p < +\infty$. The sequence classes $\ell_p^w(\cdot)$ and $\ell_p^u(\cdot)$ are multilinearly stable if and only if $p = 1$.
\end{theorem}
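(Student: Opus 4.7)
I will handle $\ell_p^u(\cdot)$ essentially for free: $\ell_p^u(\cdot) < \ell_p^w(\cdot)$ holds directly from the definition, and $\ell_p^w(\cdot)$ is finitely determined (Example~\ref{firstex}), so Corollary~\ref{cornew}(i) will tell us that an $n$-linear operator is $(\ell_p^u,\ldots,\ell_p^u;\ell_p^u)$-summing if and only if it is $(\ell_p^w,\ldots,\ell_p^w;\ell_p^w)$-summing, with the same norm. Hence both classes are multilinearly stable together, and it suffices to settle $\ell_p^w(\cdot)$.

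For the positive direction ($p=1$) the technical heart of the argument will be the diagonal estimate
$$\sum_{j=1}^{\infty}|B(e_j,\ldots,e_j)| \leq \|B\|\qquad \text{for every } B \in {\cal L}(c_0,\ldots,c_0;\mathbb{K}).$$
I plan to prove it by a randomization valid over both scalar fields: choose signs $|\zeta_j|=1$ with $\zeta_j B(e_j,\ldots,e_j) = |B(e_j,\ldots,e_j)|$, take $n-1$ mutually independent Rademacher sequences $\xi^{(1)},\ldots,\xi^{(n-1)}$, and for a truncation index $N$ set
$$a^{1}_{j} = \zeta_j\xi^{(1)}_j,\qquad a^{m}_{j} = \xi^{(m)}_{j}\ (2\le m\le n-1),\qquad a^{n}_{j} = \xi^{(1)}_j\xi^{(2)}_j\cdots\xi^{(n-1)}_j\quad (j\le N),$$
with $a^{m}_{j}=0$ otherwise. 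Each $a^m$ lies pointwise in the unit ball of $c_0$, and the independence of the $n-1$ Rademacher sequences forces $\mathbb{E}[a^{1}_{j_1}\cdots a^{n}_{j_n}] = \zeta_{j_1}\prod_{m=1}^{n-1}\delta_{j_m,j_n}$, which vanishes off the diagonal $j_1=\cdots=j_n=j$ and equals $\zeta_j$ on it. Therefore $\mathbb{E}\,B(a^1,\ldots,a^n) = \sum_{j\le N}|B(e_j,\ldots,e_j)|$ while $|\mathbb{E}\,B(a^1,\ldots,a^n)|\le\|B\|$; letting $N\to\infty$ closes the estimate. Once this is in place I will transfer it to the Banach-space setting through the isometric identification $\ell_1^w(E)\cong {\cal L}(c_0,E)$, which assigns to $(x_j) \in \ell_1^w(E)$ the unique $u\in {\cal L}(c_0,E)$ with $u(e_j)=x_j$ and $\|u\|=\|(x_j)\|_{w,1}$. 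Given $A\in{\cal L}(E_1,\ldots,E_n;F)$ and $(x_j^m)\in\ell_1^w(E_m)$ with associated operators $u_m$, applying the diagonal estimate to the scalar form $B = \psi\circ A\circ(u_1,\ldots,u_n)$ for each $\psi\in B_{F'}$ yields $\sum_j|\psi(A(x_j^1,\ldots,x_j^n))|\le \|A\|\prod_m\|(x_j^m)\|_{w,1}$, and a supremum over $\psi$ combined with Lemma~\ref{lema1}(c) gives $\|\widehat{A}\|=\|A\|$.

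For the negative direction ($p>1$) I will produce a bilinear counterexample. Write $p^*$ for the conjugate exponent of $p$ and set $r = \max\{1,\,p^*/2\}\ge 1$, so that $\ell_r$ is a Banach space both when $1<p\le 2$ and when $p\ge 2$; the pointwise multiplication
$$A\colon \ell_{p^*}\times\ell_{p^*}\longrightarrow \ell_r,\qquad A((\alpha_j),(\beta_j)) = (\alpha_j\beta_j)_j,$$
is bounded by a direct H\"older computation in both sub-cases. On the one hand, testing against any $\varphi\in\ell_p=(\ell_{p^*})'$ shows $\|(e_j)\|_{\ell_p^w(\ell_{p^*})}=1$. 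On the other hand, the normalized test vectors $\varphi_N = N^{-1/r^*}(e_1+\cdots+e_N) \in (\ell_r)'$ force $\|(e_j)_{j=1}^N\|_{\ell_p^w(\ell_r)}\to \infty$ as $N\to\infty$. Since $A(e_j,e_j)=e_j$, inequality~(\ref{eq1}) admits no uniform constant $C$, and Proposition~\ref{firstprop} then prevents $A$ from being $(\ell_p^w(\cdot),\ell_p^w(\cdot);\ell_p^w(\cdot))$-summing.

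The delicate ingredient of the whole proof will be the diagonal estimate. The essential trick is to build real $\pm 1$ random variables whose $n$-fold correlation reproduces the Kronecker delta $\delta_{j_1=\cdots=j_n}$ uniformly in $n$ and in the scalar field; letting $a^n_j$ be the coordinatewise product of $\xi^{(1)}_j,\ldots,\xi^{(n-1)}_j$ is precisely what makes every off-diagonal expectation vanish while keeping $\|a^n\|_\infty=1$, allowing the same argument to work over $\mathbb{R}$ as over $\mathbb{C}$.
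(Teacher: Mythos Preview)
Your proposal is correct. For the positive direction ($p=1$), your randomization is exactly the Rademacher scheme the paper uses---independent signs on $n-1$ slots, their product on the $n$-th---but you package it as a scalar diagonal estimate on $c_0^n$ and then transfer via the isometry $\ell_1^w(E)\cong\mathcal{L}(c_0,E)$, whereas the paper applies the same integral identity directly to the Banach-space vectors $x_j^m$ and invokes the characterization $\|(x_j)\|_{w,1}=\sup_{|\lambda_j|\le 1}\|\sum_j\lambda_j x_j\|$. Your route isolates the combinatorial core more cleanly; the paper's avoids the factorization detour. The reduction of $\ell_p^u(\cdot)$ to $\ell_p^w(\cdot)$ via Corollary~\ref{cornew}(i) is identical in both. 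For $p>1$, you produce a single \emph{bilinear} counterexample $\ell_{p^*}\times\ell_{p^*}\to\ell_{\max\{1,p^*/2\}}$, while the paper chooses $n\ge p^*$ and uses the $n$-fold multiplication $(\ell_{p^*})^n\to\ell_1$; your construction has the advantage of being bilinear for every $p>1$, the paper's has the advantage that the target is always $\ell_1$, making $(e_k)\notin\ell_p^w(\ell_1)$ immediate without the $r^*$ computation.
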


\begin{proof} Let $A \in {\cal L}(E_1, \ldots, E_n;F)$ and $(x_j^m)_{j=1}^\infty \in \ell_1^w(E_m)$, $m = 1, \ldots, n$, be given. We shall use the following trick: for every $k \in \mathbb{N}$, 
\begin{align*}&\sum_{j=1}^k A(x_j^1, \ldots, x_j^n)\\
& = \int_0^1 \cdots \int_0^1 A\left(\sum_{j=1}^kr_j(t_1)x_j^1, \ldots, \sum_{j=1}^kr_j(t_{n-1})x_j^{n-1}, \sum_{j=1}^k \prod_{l=1}^{n-1}r_j(t_l)x_j^n \right)dt_1 \cdots dt_{n-1}.
 \end{align*}

For all $|\lambda_j^i| \leq 1$, $j = 1, \ldots, k$, $i = 1, \ldots, n$, we have
\begin{align*}
&\sum_{j=1}^k \lambda_j^1 \cdots \lambda_j^n A(x_j^1, \ldots, x_j^n) = \sum_{j=1}^k A(\lambda_j^1x_j^1, \ldots, \lambda_j^nx_j^n)\\
&= \int_0^1 \cdots \int_0^1 A\left(\sum_{j=1}^kr_j(t_1)\lambda_j^1x_j^1, \ldots, \sum_{j=1}^kr_j(t_{n-1})\lambda_j^{n-1}x_j^{n-1}, \sum_{j=1}^k \prod_{l=1}^{n-1}r_j(t_l)\lambda_j^nx_j^n \right)dt_1 \cdots dt_{n-1},
\end{align*}
so
\begin{align*}& \left\|\sum_{j=1}^k  \lambda_j^1 \cdots \lambda_j^n A(x_j^1, \ldots, x_j^n) \right\|\\
& \leq \sup_{t_i \in [0,1]}\left\| A\left(\sum_{j=1}^kr_j(t_1)\lambda_j^1x_j^1, \ldots, \sum_{j=1}^kr_j(t_{n-1})\lambda_j^{n-1}x_j^{n-1}, \sum_{j=1}^k \prod_{l=1}^{n-1}r_j(t_l)\lambda_j^nx_j^n \right) \right\|\\
&\leq \|A\|\cdot\sup_{t_i \in [0,1]}\left\| \sum_{j=1}^kr_j(t_1)\lambda_j^1x_j^1\right\|\cdots \left\|\sum_{j=1}^kr_j(t_{n-1})\lambda_j^{n-1}x_j^{n-1}\right\|\cdot \left\| \sum_{j=1}^k \prod_{l=1}^{n-1}r_j(t_l)\lambda_j^nx_j^n  \right\|\\
&\leq \|A\|\cdot  \prod_{m=1}^n \left\|(x_j^m)_{j=1}^k\right\|_{w,1},
\end{align*}
for every $k \in \mathbb{N}$. Therefore,
\begin{align*} \|(A(x_j^1, \ldots, &x_j^n)_{j=1}^k\|_{w,1}= \sup\left\{\left\|\sum_{j=1}^k \lambda_j A(x_j^1, \ldots, x_j^n) \right\| : |\lambda_j| \leq 1, j=1, \ldots, n \right\}\\
&= \sup\left\{\left\|\sum_{j=1}^k \lambda_j^1 \cdots \lambda_j^n A(x_j^1, \ldots, x_j^n) \right\| : |\lambda_j^i| \leq 1, j = 1, \ldots, k, i = 1, \ldots, n \right\}\\
&\leq \|A\|\cdot  \prod_{m=1}^n \left\|(x_j^m)_{j=1}^k\right\|_{w,1}\leq \|A\|\cdot  \prod_{m=1}^n \left\|(x_j^m)_{j=1}^\infty\right\|_{w,1},
\end{align*}
for every $k \in \mathbb{N}$. As $\ell_1^w(\cdot)$ is finitely determined, by Proposition \ref{firstprop} we conclude that $\widehat{A} \colon \ell_1^w(E_1) \times \cdots \times \ell_1^w(E_n) \longrightarrow \ell_1^w(F)$ is well-defined, $n$-linear,  continuous and that $\|\widehat{A}\| \leq \|A\|$. As $\ell_1^w(\cdot)$ is linearly stable, the reverse inequality follows from Lemma \ref{lema1}(c), completing the proof that $\ell_1^w(\cdot)$ is multilinearly stable.

As $\ell_1^u(\cdot) < \ell_1^w(\cdot)$ and $\ell_1^w(\cdot)$ is finitely determined, by Corollary \ref{cornew}(i) we have
$${\cal L}_{n,\ell_1^u(\cdot); \ell_1^u(\cdot)} \stackrel{1}{=} {\cal L}_{n,\ell_1^w(\cdot); \ell_1^w(\cdot)} \stackrel{1}{=} {\cal L}^n, $$
for every $n$, what gives the multilinear stability of $\ell_1^u(\cdot)$.

Given $1 < p < +\infty$, choose $n \geq p^*$, $\frac{1}{p} + \frac{1}{p^*} = 1$, and consider the continuous $n$-linear operator
$$A \colon \left(\ell_{p^*}\right)^n \longrightarrow \ell_1~,~A\left((\lambda_j^1)_{j=1}^\infty, \ldots, (\lambda_j^n)_{j=1}^\infty \right) = (\lambda_j^1\cdots \lambda_j^n)_{j=1}^\infty . $$
As $(e_k)_{k=1}^\infty $ belongs to $\ell_p^w(\ell_{p^*})$ but not to $\ell_p^w(\ell_{1})$, the sequence class $\ell_p^w(\cdot)$ fails to be multilinearly stable. As $\ell_p^u(\cdot) < \ell_p^w(\cdot)$ and $\ell_p^w(\cdot)$ is finitely determined, by Corollary \ref{cornew}(i) we have
$${\cal L}_{n,\ell_p^u(\cdot); \ell_p^u(\cdot)} \stackrel{1}{=} {\cal L}_{n,\ell_p^w(\cdot); \ell_p^w(\cdot)} {\neq} {\cal L}^n, $$
proving that $\ell_p^u(\cdot)$ is not multilinearly stable either.
\end{proof}

\begin{remark}\rm (a) The complex case of $\ell_p^w(\cdot)$ in Theorem \ref{firsttheo} was treated in the unpublished thesis \cite{tesecarlos}, but the argument used there does not work in the real case and the case of $\ell_p^u(\cdot)$ is not touched.\\
(b) The proof of Theorem \ref{firsttheo} actually proves that ${\cal L}_{\ell_1^{t_1}(\cdot),  \ldots, \ell_1^{t_n}(\cdot); \ell_1^u(\cdot)} \stackrel{1}{=} {\cal L}^n$ with $t_j = u$ for some $j \in \{1,\ldots,n\}$ and $t_i = w$ for $i \neq j$.
\end{remark}

As noted in the Introduction and according to what we have just done in the proof of Theorem \ref{firsttheo}, typical multilinear arguments are needed to prove the multilinear stability of a given sequence class. Here is another example, concerning now sequence classes that play an important role in the geometry and in the probability of Banach spaces:

\begin{theorem}\label{theorad} The sequence classes ${\rm Rad}(\cdot)$ and ${\rm RAD}(\cdot)$ are multilinearly stable.
\end{theorem}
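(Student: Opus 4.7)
The plan is to prove that ${\rm Rad}(\cdot)$ is multilinearly stable directly via a Rademacher randomization identity, and then to transfer the statement to ${\rm RAD}(\cdot)$ using the machinery already developed.

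First, for $A \in {\cal L}(E_1,\ldots,E_n;F)$, finite sequences $x_1^m,\ldots,x_k^m \in E_m$ ($m=1,\ldots,n$), and $s \in [0,1]$, I would establish the randomization identity
\begin{equation*}
\sum_{j=1}^k r_j(s) A(x_j^1,\ldots,x_j^n) = \int_{[0,1]^{n-1}} A\!\left(\textstyle\sum_j r_j(t_1)x_j^1,\ldots,\sum_j r_j(t_{n-1})x_j^{n-1},\sum_j r_j(s)\prod_{l=1}^{n-1}r_j(t_l)x_j^n\right)dt_1\cdots dt_{n-1},
\end{equation*}
which follows from multilinearity of $A$ and the orthogonality $\int_0^1 r_i(t)r_j(t)\,dt = \delta_{ij}$. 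Taking norms inside the integral, bounding by $\|A\|$ times the product of the norms of the slots, and then applying the Cauchy--Schwarz inequality on $[0,1]^{n-1}$ to separate the first $n-1$ factors from the last, and finally squaring and integrating in $s$, one arrives at
\begin{equation*}
\left\|(A(x_j^1,\ldots,x_j^n))_{j=1}^k\right\|_{{\rm Rad}(F)}^2 \leq \|A\|^2 \cdot \prod_{m=1}^{n-1}\left\|(x_j^m)_{j=1}^k\right\|_{{\rm Rad}(E_m)}^2 \cdot I_n,
\end{equation*}
where $\displaystyle I_n = \int_{[0,1]^n}\left\|\sum_{j=1}^k r_j(s)\prod_{l=1}^{n-1} r_j(t_l)\,x_j^n\right\|^2 ds\,dt_1\cdots dt_{n-1}$.

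The crucial step, and the one I expect to be the main obstacle, is evaluating $I_n$. Here I would invoke the symmetry of the Rademacher sequence: for any signs $\varepsilon_j \in \{-1,+1\}$, the joint distribution of $(\varepsilon_j r_j(\cdot))_j$ on $[0,1]$ coincides with that of $(r_j(\cdot))_j$, so $\int_0^1 \|\sum_j \varepsilon_j r_j(t)y_j\|^2 dt = \int_0^1 \|\sum_j r_j(t)y_j\|^2 dt$ for any $y_j \in E_n$. Fixing $s, t_1,\ldots,t_{n-2}$, the quantities $\varepsilon_j := r_j(s)\prod_{l=1}^{n-2} r_j(t_l)$ are signs, so the integral in $t_{n-1}$ produces $\|(x_j^n)_{j=1}^k\|_{{\rm Rad}(E_n)}^2$; integrating the remaining variables via Fubini yields $I_n = \|(x_j^n)_{j=1}^k\|_{{\rm Rad}(E_n)}^2$. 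Assembling everything, for every $k \in \mathbb{N}$,
\begin{equation*}
\left\|(A(x_j^1,\ldots,x_j^n))_{j=1}^k\right\|_{{\rm Rad}(F)} \leq \|A\|\cdot\prod_{m=1}^n \left\|(x_j^m)_{j=1}^k\right\|_{{\rm Rad}(E_m)}.
\end{equation*}

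Finally, since ${\rm RAD}(\cdot)$ is finitely determined (Example \ref{firstex}) and agrees with ${\rm Rad}(\cdot)$ on finite sequences, taking the supremum over $k$ in the inequality above gives condition (c) of Proposition \ref{firstprop} for the classes $(X_m,Y)=({\rm RAD}(\cdot),{\rm RAD}(\cdot))$ with constant $\|A\|$; hence ${\rm RAD}(\cdot)$ is multilinearly stable with $\|\widehat{A}\| \leq \|A\|$, and Lemma \ref{lema1}(c) supplies the reverse inequality. Since ${\rm Rad}(\cdot)\prec{\rm RAD}(\cdot)$ (cf.\ Remark \ref{remark1}(a) and the definition), Corollary \ref{cornew}(ii) transfers the multilinear stability, together with the equality of norms, back to ${\rm Rad}(\cdot)$.
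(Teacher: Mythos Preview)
Your argument is correct and takes a genuinely different route from the paper. The paper first passes to the completed projective tensor product $E_1\widehat\otimes_\pi\cdots\widehat\otimes_\pi E_n$, rewrites the Rademacher sum using the linearization $B_L$, then invokes the dual description of the projective norm and an external estimate (\cite[Proposition~3.1]{BotelhoBlasco}) to bound $\sum_{j=1}^k |A(x_j^1,\ldots,x_j^n)|$ by $\|A\|\prod_m\|(x_j^m)_{j=1}^k\|_{{\rm Rad}(E_m)}$ for $A$ in the unit ball of ${\cal L}(E_1,\ldots,E_n)$. Your proof avoids both tensor products and the cited result entirely: the randomization identity (essentially the same device the paper uses for $\ell_1^w(\cdot)$ in Theorem~\ref{firsttheo}) combined with Cauchy--Schwarz over $[0,1]^{n-1}$ and the sign invariance of the Rademacher system does all the work. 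Your approach is more elementary and self-contained; the paper's is shorter on the page only because the hard inequality is outsourced. The transfer steps (${\rm RAD}(\cdot)$ via Proposition~\ref{firstprop}(c) and then ${\rm Rad}(\cdot)$ via Corollary~\ref{cornew}(ii)) match the paper exactly, though note that the paper justifies ${\rm Rad}(\cdot)\prec{\rm RAD}(\cdot)$ by the completeness of $L_2([0,1];F)$ rather than Remark~\ref{remark1}(a), which gives only the closed-subspace part; you should cite the Cauchy criterion in $L_2$ for the second half of the $\prec$ condition.
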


\begin{proof} Let $B \in {\cal L}(E_1, \ldots, E_n;F)$. By $B_L$ we denote the linearization of $B$ from the completed projective tensor product $E_1 \widehat\otimes_\pi \cdots \widehat\otimes_\pi E_n$ to $F$. The projective norm is denoted by $\| \cdot \|_\pi$. For any natural number $k$ and any $x_j^i \in E_i$, $i=1,\ldots,n$, $j = 1, \ldots, k$,
\begin{align*}
\left\|(B(x_j^1, \ldots, x_j^n))_{j=1}^k\right\|_{\mathrm{Rad}(F)} & =
  \left(\int_0^1 \left\Vert B_L\left(\sum_{j=1}^k r_j(t)x_j^{(1)}\otimes \dots \otimes x_j^{(n)}\right) \right\Vert^2 dt\right)^{1/2}\\
& \leq \|B\| \cdot \left(\int_0^1 \left\Vert \sum_{j=1}^k r_j(t)x_j^{(1)}\otimes \dots \otimes x_j^{(n)} \right\Vert^2_\pi dt\right)^{1/2}\\
& \leq \|B\| \cdot \sup_{t \in [0,1]} \left\Vert \sum_{j=1}^k r_j(t)x_j^{(1)}\otimes \dots \otimes x_j^{(n)} \right\Vert_\pi\\
& \overset{(*)}{=} \|B\|\cdot \sup_{t \in [0,1]} \sup_{A \in B_{\mathcal{L}(E_1,\ldots, E_n)}} \left\vert \sum_{j=1}^k A\left(r_j(t)x_j^{(1)},\ldots,x_j^{(n)}\right) \right\vert\\
& \leq \|B\|\cdot \sup_{t \in [0,1]} \sup_{A \in B_{\mathcal{L}(E_1,\ldots, E_n)}} \sum_{j=1}^k |r_j(t)|\cdot \left\vert A\left(x_j^{(1)},\ldots,x_j^{(n)}\right) \right\vert\\
&= \|B\|\cdot \sup_{A \in B_{\mathcal{L}(E_1,\ldots, E_n)}} \sum_{j=1}^k \left\vert A\left(x_j^{(1)},\ldots,x_j^{(n)}\right) \right\vert\\
& \overset{(**)}{\leq} \|B\| \cdot \sup_{A \in B_{\mathcal{L}(E_1,\ldots, E_n)}} \|A\|\cdot \prod_{m=1}^n \left\Vert (x_j^{m})_{j=1}^k \right\Vert_{\mathrm{Rad}(E_i)}\\
&= \|B\| \cdot \prod_{m=1}^n \left\Vert (x_j^{m})_{j=1}^k \right\Vert_{\mathrm{Rad}(E_i)},
\end{align*}
where $(*)$ follows from the dual formula for the projective norm \cite[Formula (2.3), p.\,23]{ryan}, and $(**)$ follows from \cite[Proposition 3.1]{BotelhoBlasco}. As ${\rm RAD}(\cdot)$ is finitely determined, the calculation above says that the equivalent conditions of Proposition \ref{firstprop}, with $X_1 = \cdots = X_n = Y = {\rm RAD}(\cdot)$, hold for $B$ and that $\|\widehat{B}\| \leq \|B\|$. The reverse inequality follows from Lemma \ref{lema1}(c). This proves that ${\rm RAD}(\cdot)$ is multlinearly stable.

The completeness of $L_2([0,1];F)$ tells us that ${\rm Rad}(\cdot) \prec {\rm RAD}(\cdot)$. Since ${\rm RAD}(\cdot)$ is finitely determined, by Corollary \ref{cornew}(ii) we have
$${\cal L}_{n,{\rm Rad}(\cdot); {\rm Rad}(\cdot)} \stackrel{1}{=} {\cal L}_{n,{\rm RAD}(\cdot); {\rm RAD}(\cdot)} \stackrel{1}{=} {\cal L}^n, $$
for every $n$, proving the multilinear stability of ${\rm Rad}(\cdot)$.
\end{proof}

The sequence class $\ell_p\langle \,\cdot\, \rangle$ was introduced by Cohen \cite{cohen73} in order to describe the linear operators having absolutely $p^{*}$-summing adjoints, $\frac{1}{p} + \frac{1}{p^*} = 1$. Much research has been made since then using these spaces, see, e.g., \cite{aywa,BotelhoBlasco,budiestel,bu12,jamilson13}. Now we show that $\ell_p\langle \,\cdot\, \rangle$ is multilinearly stable.

\begin{theorem}\label{lpCohen} Let $p, p_1, \dots, p_n \geq 1$ be such that $\frac{1}{p} \leq \frac{1}{p_1} + \cdots + \frac{1}{p_n}$. Then $${\cal L}_{\ell_{p_1}\langle\, \cdot\, \rangle, \ldots,\ell_{p_n}\langle\,\cdot\,\rangle; \ell_p\langle\,\cdot\,\rangle } \stackrel{1}{=} {\cal L}^n.$$
In particular, the sequence class $\ell_p\langle\,\cdot\,\rangle$ is multilinearly stable for every $1 \leq p < +\infty$.
\end{theorem}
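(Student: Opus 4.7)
I would first reduce to the equality case $\frac{1}{p}=\frac{1}{p_1}+\cdots+\frac{1}{p_n}$. Setting $\frac{1}{q}:=\sum_m\frac{1}{p_m}\ge\frac{1}{p}$, one has $p^*\le q^*$, so $\ell_{p^*}^w(F')\stackrel{1}{\hookrightarrow}\ell_{q^*}^w(F')$, and dualizing through the defining pairing yields $\ell_q\langle F\rangle\stackrel{1}{\hookrightarrow}\ell_p\langle F\rangle$. Hence proving the equality case gives $\|\widehat A\|\le\|A\|$ for the original $p$ as well; the reverse inequality $\|A\|\le\|\widehat A\|$ follows from Lemma \ref{lema1}(c) as soon as $\ell_p\langle\,\cdot\,\rangle$ is known to be linearly stable, and this is a short computation: for $u\in{\cal L}(E;F)$ and $(\varphi_j)\in B_{\ell_{p^*}^w(F')}$, the sequence $(u^*\varphi_j)$ lies in $\ell_{p^*}^w(E')$ with weak-$p^*$-norm at most $\|u\|$, so $\sum_j|\varphi_j(u(x_j))|=\sum_j|u^*\varphi_j(x_j)|\le\|u\|\cdot\|(x_j)\|_{\ell_p\langle E\rangle}$ and $\|\widehat u\|\le\|u\|$.

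Assume now $\frac{1}{p}=\sum_m\frac{1}{p_m}$ and let $A_L\colon Z\to F$ be the linearization of $A$ on $Z:=E_1\widehat\otimes_\pi\cdots\widehat\otimes_\pi E_n$, with $\|A_L\|=\|A\|$. By linear stability,
$$\bigl\|(A(x_j^1,\ldots,x_j^n))_j\bigr\|_{\ell_p\langle F\rangle}=\bigl\|\widehat{A_L}\bigl((x_j^1\otimes\cdots\otimes x_j^n)_j\bigr)\bigr\|_{\ell_p\langle F\rangle}\le\|A\|\cdot\bigl\|(x_j^1\otimes\cdots\otimes x_j^n)_j\bigr\|_{\ell_p\langle Z\rangle},$$
so it suffices to establish the H\"older-for-tensors estimate
$$(\star)\qquad\bigl\|(x_j^1\otimes\cdots\otimes x_j^n)_j\bigr\|_{\ell_p\langle Z\rangle}\le\prod_{m=1}^n\|(x_j^m)_j\|_{\ell_{p_m}\langle E_m\rangle}.$$
I would prove $(\star)$ by induction on $n$. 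The case $n=1$ is trivial, and for $n\ge 2$ the isometric associativity $Z=Y\widehat\otimes_\pi E_n$ with $Y:=E_1\widehat\otimes_\pi\cdots\widehat\otimes_\pi E_{n-1}$ lets one view $x_j^1\otimes\cdots\otimes x_j^n$ as $y_j\otimes x_j^n$, where $y_j:=x_j^1\otimes\cdots\otimes x_j^{n-1}$; this reduces the $n$-variable statement to the bilinear instance of $(\star)$ applied to the pair $((y_j),(x_j^n))$, combined with the inductive bound on $\|(y_j)\|_{\ell_q\langle Y\rangle}$ where $\frac{1}{q}=\sum_{m=1}^{n-1}\frac{1}{p_m}$.

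The bilinear base case of $(\star)$ is the main obstacle. Given $(\tilde B_j)\in B_{\ell_{p^*}^w((E_1\widehat\otimes_\pi E_2)')}$ with $\frac{1}{p}=\frac{1}{p_1}+\frac{1}{p_2}$, I would identify each $\tilde B_j$ with an operator $T_j\in{\cal L}(E_1;E_2')$ via $T_j(u)(v)=\tilde B_j(u,v)$, $\|T_j\|=\|\tilde B_j\|$. Then
$$\sum_j|\tilde B_j(x_j^1,x_j^2)|=\sum_j|T_j(x_j^1)(x_j^2)|\le\bigl\|(T_j(x_j^1))_j\bigr\|_{w,p_2^*}\cdot\|(x_j^2)\|_{\ell_{p_2}\langle E_2\rangle},$$
so the whole matter reduces to the key estimate $\|(T_j(x_j^1))_j\|_{w,p_2^*}\le\|(x_j^1)\|_{\ell_{p_1}\langle E_1\rangle}$, in which the H\"older balance $\frac{1}{p_1^*}=\frac{1}{p^*}+\frac{1}{p_2}$ is decisive. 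Goldstine's theorem replaces the sup over $v''\in B_{E_2''}$ by the sup over $v\in B_{E_2}$, and scalar $\ell_{p_2^*}$--$\ell_{p_2}$ duality then rewrites the weak norm as $\sup_{v\in B_{E_2}}\sup_{\|\beta\|_{p_2}\le 1}\sum_j|\beta_j\tilde B_j(x_j^1,v)|$. Setting $\xi_j(x):=\beta_j\tilde B_j(x,v)\in E_1'$, the definition of $\ell_{p_1}\langle E_1\rangle$ bounds the inner sum by $\|(\xi_j)\|_{w,p_1^*}\cdot\|(x_j^1)\|_{\ell_{p_1}\langle E_1\rangle}$, so it remains to verify that $\|(\xi_j)\|_{w,p_1^*}\le 1$. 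For any $u''\in B_{E_1''}$, the functional $\Phi_{u'',v}(\tilde B):=u''(\tilde B(\,\cdot\,,v))$ lies in $B_{(E_1\widehat\otimes_\pi E_2)''}$, so $(\Phi_{u'',v}(\tilde B_j))_j\in\ell_{p^*}$ with $\ell_{p^*}$-norm at most $\|(\tilde B_j)\|_{w,p^*}\le 1$; scalar H\"older with $\frac{1}{p_1^*}=\frac{1}{p_2}+\frac{1}{p^*}$ applied to the product $(\beta_j)\cdot(\Phi_{u'',v}(\tilde B_j))$ then yields $\|(\xi_j)\|_{w,p_1^*}\le 1$, closing the proof.
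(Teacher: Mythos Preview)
Your argument is largely correct and follows a genuinely different route from the paper's. The paper relies on the isometric identification $\ell_q\langle E\rangle=\ell_q\widehat\otimes_\pi E$: it picks finite tensor representations $z=\sum_j(\lambda_j^i)_i\otimes x_j$ of elements of $\ell_{p_m}\otimes_\pi E_m$, applies scalar H\"older to the sequences $(\lambda_j^i)_i$, and then extends the resulting operator by density. You instead stay with the dual description of $\ell_p\langle\,\cdot\,\rangle$ via $\ell_{p^*}^w(E')$, linearize $A$ on the projective tensor product of the \emph{domain} spaces, and reduce everything to the tensor--H\"older estimate $(\star)$. Your bilinear proof of $(\star)$---pairing with $(\tilde B_j)\in B_{\ell_{p^*}^w((E_1\widehat\otimes_\pi E_2)')}$, passing to $B_{E_2}$ via Goldstine, and splitting through the scalar H\"older relation $\tfrac{1}{p_1^*}=\tfrac{1}{p_2}+\tfrac{1}{p^*}$---is a clean self-contained computation that avoids the density/extension step; and your explicit induction makes the passage to general $n$ transparent, whereas the paper only treats $n=2$.

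There is, however, a small gap in your initial reduction. Setting $\tfrac{1}{q}=\sum_m\tfrac{1}{p_m}$ tacitly assumes $q\ge 1$; if $\sum_m\tfrac{1}{p_m}>1$ (e.g.\ $n=2$, $p_1=p_2=p=1$) then $q<1$, the conjugate $q^*$ is meaningless, and $\ell_q\langle\,\cdot\,\rangle$ is not defined in the present framework. The fix is painless: rather than decreasing $p$ to $q$, enlarge each $p_m$ to some $p_m'\ge p_m$ with $\sum_m\tfrac{1}{p_m'}=\tfrac{1}{p}$ (for instance $p_m'=np$) and use the inclusion $\ell_{p_m}\langle E_m\rangle\stackrel{1}{\hookrightarrow}\ell_{p_m'}\langle E_m\rangle$, which follows from $B_{\ell_{(p_m')^*}^w(E_m')}\subseteq B_{\ell_{p_m^*}^w(E_m')}$. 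Alternatively, observe that your bilinear computation only needs the H\"older inequality $\tfrac{1}{p_1^*}\le\tfrac{1}{p_2}+\tfrac{1}{p^*}$, which is exactly $\tfrac{1}{p}\le\tfrac{1}{p_1}+\tfrac{1}{p_2}$; so $(\star)$ already holds in the inequality form, and the induction then goes through with the intermediate exponent chosen as $\max\!\bigl(1,(\sum_{m<n}\tfrac{1}{p_m})^{-1}\bigr)$.
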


\begin{proof} Let us prove the bilinear case. It is well known that $\ell_q\langle E \rangle =\ell_q \widehat{\otimes}_\pi E$ isometrically for all $q \geq 1$ and Banach space $E$ (see, e.g., \cite[Corollary 3.9]{aywa} or \cite[Theorem 1]{budiestel}). 

Let $\varepsilon > 0$. Given $z \in \ell_{p_1} \otimes_\pi E$ and $w \in \ell_{p_2} \otimes_\pi F$, we can take representations
$$z = \sum_{j=1}^k (\lambda_j^i)_{i=1}^\infty \otimes x_j = \sum_{j=1}^k (\lambda_j^i x_j)_{i=1}^\infty = \left(\sum_{j=1}^k\lambda_j^i x_j \right)_{i=1}^\infty, $$
$$w = \sum_{l=1}^m (\alpha_l^i)_{i=1}^\infty \otimes y_l = \sum_{l=1}^m (\alpha_l^i y_l)_{i=1}^\infty = \left(\sum_{l=1}^m\alpha_l^i y_l \right)_{i=1}^\infty, $$
with $x_1, \ldots, x_k \in E,y_1, \ldots, y_m \in F$, $(\lambda_j^i)_{i=1}^\infty \in \ell_{p_1}$ for $j = 1,\ldots, k$, $(\alpha_l^i)_{i=1}^\infty \in \ell_{p_2}$ for $ l = 1, \ldots, m$, and
$$\sum_{j=1}^k \left\|(\lambda_j^i)_{i=1}^\infty \right\|_{p_1}\cdot \|x_j\| = \sum_{j=1}^k\left(\sum_{i=1}^\infty |\lambda_j^i|^{p_1} \cdot \|x_j\|^{p_1}\right)^{1/{p_1}} < (1+\varepsilon)\left\|x \right\|_\pi,$$
$$\sum_{l=1}^m \left\|(\alpha_l^i)_{i=1}^\infty \right\|_{p_2}\cdot \|y_l\| = \sum_{l=1}^m\left(\sum_{i=1}^\infty |\alpha_l^i|^{p_2} \cdot \|y_l\|^{p_2}\right)^{1/{p_2}} < (1+\varepsilon)\left\|y \right\|_\pi.$$
From $\frac{1}{p} \leq \frac{1}{p_1} + \frac{1}{p_2}$ we conclude that $\left(\lambda_j^i \alpha_l^i \right)_{i=1}^\infty \in \ell_p$  for $j = 1,\ldots, k, l = 1, \ldots, m$. The bilinearity of $A$ yields
\begin{align*}  
\left( A \left(\sum_{j=1}^k\lambda_j^i x_j , \sum_{l=1}^m\alpha_l^i y_l\right)\right)_{i=1}^\infty& = \left( \sum_{j=1}^k \sum_{l=1}^m \lambda_j^i \alpha_l^i A(x_j,y_l)\right)_{i=1}^\infty =  \sum_{j=1}^k \sum_{l=1}^m \left(\lambda_j^i \alpha_l^i A(x_j,y_l)\right)_{i=1}^\infty\\
& =  \sum_{j=1}^k \sum_{l=1}^m \left(\lambda_j^i \alpha_l^i \right)_{i=1}^\infty \otimes A(x_j,y_l) \in \ell_p \widehat{\otimes}_\pi G.
\end{align*}
This shows that the operator
$$\overline{A} \colon \ell_{p_1} \otimes_\pi E \times \ell_{p_2} \otimes_\pi F  \longrightarrow \ell_p \widehat{\otimes}_\pi G~,~\overline{A}(z,w) = \left(A(z_i, w_i) \right)_{i=1}^\infty, 
$$
where $z = (z_i)_{i=1}^\infty$ and $w = (w_i)_{i=1}^\infty$, is well defined. It is plain that $\overline{A}$ is bilinear. Moreover,
\begin{align*} \left\| \overline{A}(z,w) \right\|_\pi& \leq \sum_{j=1}^k \sum_{l=1}^m \left\|\left(\lambda_j^i \alpha_l^i \right)_{i=1}^\infty\right\|_p \cdot \|A(x_j,y_l)\| \\
&\leq \|A\| \cdot \sum_{j=1}^k \sum_{l=1}^m \left[ \left(\sum_{i=1}^\infty |\lambda_j^i|^{p_1}\right)^{1/{p_1}}\cdot \left(\sum_{i=1}^\infty|\alpha_l^i|^{p_2}\right)^{1/{p_2}} \right] \cdot \|x_j\| \cdot \|y_l\|\\
&= \|A\| \cdot\left[ \sum_{j=1}^k  \left(\sum_{i=1}^\infty |\lambda_j^i|^{p_1}\right)^{1/{p_1}}\cdot \|x_j\| \right]\cdot \left[ \sum_{l=1}^m \cdot \left(\sum_{i=1}^\infty|\alpha_l^i|^{p_2}\right)^{1/{p_2}}   \cdot \|y_l\|\right]\\
&   < (1+\varepsilon)^2\|A\| \cdot \left\|z \right\|_\pi \cdot \|w\|_\pi.
\end{align*}
Making $\varepsilon \longrightarrow 0^+$ we conclude that the bilinear operator $\overline{A}$
is continuous and $\|\overline{A}\| \leq \|A\|$. Reasoning as in Lemma \ref{lema1} we get $\|\overline{A}\| = \|A\|$. Since multilinear operators are uniformly continuous on bounded sets and $\ell_p \widehat{\otimes}_\pi G$ is a Banach space, we  can consider
$$\widetilde{A} \colon \ell_{p_1} \widehat{\otimes}_\pi E \times \ell_{p_2} \widehat{\otimes}_\pi F  \longrightarrow \ell_p \widehat{\otimes}_\pi G $$
the unique extension of $\overline{A}$ with $\|\overline{A}\| = \|\widetilde{A}\| = \|A\|$. From (\ref{holder}), the bilinear operator
$$\widehat{A} \colon \ell_{p_1}(E) \times \ell_{p_2}(F)\longrightarrow \ell_p(G)~,~\widehat{A}\left((x_j)_{j=1}^\infty , (y_j)_{j=1}^\infty\right) =   \left(A(x_j,y_j) \right)_{j=1}^\infty,$$
is continuous. Given $z \in  \ell_{p_1} \widehat{\otimes}_\pi E$ and $w \in  \ell_{p_2} \widehat{\otimes}_\pi F$, take $(z_k)_{k=1}^\infty \in \ell_{p_1} \otimes_\pi E$ and $(w_k)_{k=1}^\infty \in \ell_{p_2} \otimes_\pi F$ such that $z_k \stackrel{\pi}{\longrightarrow} z$ and $w_k \stackrel{\pi}{\longrightarrow} w$. By the continuity of $\widetilde{A}$, $\widetilde{A}(z_k, w_k) \stackrel{\pi}{\longrightarrow} \widetilde{A}(z,w). $ As  $\ell_p \widehat{\otimes}_\pi G \stackrel{1}{\hookrightarrow} \ell_p(G)$, we have $\widetilde{A}(z_k,w_k) \longrightarrow \widetilde{A}(z,w)$ in $\ell_p(G)$. For the same reason, we have $z_k \longrightarrow z$ in $\ell_{p_1}(E)$ and $w_k \longrightarrow w$ in $\ell_{p_2}(F)$. By the continuity of $\widehat{A}$,
$$\widetilde{A}(z_k,w_k) = \widehat{A}(z_k,w_k) \longrightarrow \widehat{A}(z,w) {\rm ~in~} \ell_p(G), $$
  so $\widehat{A}(z,w)  = \widetilde{A}(z,w) \in \ell_p \widehat{\otimes}_\pi G$ for all $(z,w) \in \ell_{p_1} \widehat{\otimes}_\pi E \times \ell_{p_2} \widehat{\otimes}_\pi F$, completing the proof that $\left(A(x_j,y_j) \right)_{j=1}^\infty \in \ell_p \widehat{\otimes}_\pi G$ whenever $\left(x_j \right)_{j=1}^\infty \in \ell_{p_1} \widehat{\otimes}_\pi E$ and $\left(y_j \right)_{j=1}^\infty \in  \ell_{p_2} \widehat{\otimes}_\pi F$. The norm equality $\|\widetilde{A}\| = \|\overline{A}\| = \|A\|$ completes the proof.
\end{proof}

We finish the paper with a concrete application of the multilinear stability of ${\rm Rad}(\cdot)$ and $\ell_p\langle\,\cdot\,\rangle$. Given an operator ideal $\cal I$, an $n$-linear operator $A$ belongs to ${\cal I} \circ {\cal L}^n$ if $A = u \circ B$ where $u$ is a linear operator belonging to $\cal I$ and $B$ is $n$-linear; and $A$ belongs to ${\cal L}^n \circ {\cal I}$ if $A = B \circ (u_1, \ldots, u_n)$ where each linear operator $u_j$ belongs to $\cal I$ and $B$ is $n$-linear. We denote by ${\cal D}_p^n$ the ideal of Cohen strongly $p$-summing $n$-linear operators (see Introduction (vi)), as usual we write ${\cal D}_p$ in the linear case, and by ${\cal D}_{as}^n$ we denote the ideal of Cohen almost summing $n$-linear operators (see Example \ref{exam}(c)). One of the main results of \cite{bu12}, namely \cite[Theorem 3.4]{bu12}, asserts that ${\cal D}_p^n \subseteq {\cal D}_{as}^n$, a result we improve next.  Moreover, the proof we give for the aforementioned inclusion is shorter than the original proof.

\begin{corollary} For all $p >1$ and $n \in \mathbb{N}$, ${\cal D}_p^n \cup \left({\cal L}^n\circ {\cal D}_p\right) \subseteq {\cal D}_{as}^n$.
\end{corollary}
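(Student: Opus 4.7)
The plan is to handle the two inclusions separately, in each case reducing to the $n=1$ instance of the corollary---the linear inclusion ${\cal D}_p\subseteq{\cal D}_{as}$, which is \cite[Theorem 3.4]{bu12} for $n=1$---combined with the multilinear stability results of this section. By Example \ref{exam}(c) we have ${\cal D}_{as}^n={\cal L}_{n,{\rm Rad}(\cdot);\ell_2\langle\cdot\rangle}$, and by item (vi) of the Introduction ${\cal D}_p^n={\cal L}_{n,\ell_{np}(\cdot);\ell_p\langle\cdot\rangle}$. Since ${\rm Rad}(\cdot)\prec{\rm RAD}(\cdot)$ and $\ell_2\langle\cdot\rangle$ is finitely determined, Corollary \ref{cornew} lets us equivalently test membership in ${\cal D}_{as}^n$ using ${\rm RAD}(\cdot)$-inputs.

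For the inclusion ${\cal L}^n\circ{\cal D}_p\subseteq{\cal D}_{as}^n$, suppose $A=B\circ(u_1,\ldots,u_n)$ with $u_m\in{\cal D}_p(G_m;E_m)$ and $B\in{\cal L}(E_1,\ldots,E_n;F)$. The linear case provides $u_m\in{\cal D}_{as}(G_m;E_m)$, so that each $\widehat{u_m}\colon{\rm RAD}(G_m)\to\ell_2\langle E_m\rangle$ is continuous. Applying Theorem \ref{lpCohen} with $p_1=\cdots=p_n=p=2$ (the H\"older condition $\frac{1}{2}\leq\frac{n}{2}$ being automatic), we obtain a continuous $n$-linear map $\widehat{B}\colon\ell_2\langle E_1\rangle\times\cdots\times\ell_2\langle E_n\rangle\to\ell_2\langle F\rangle$. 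The composition $\widehat{A}=\widehat{B}\circ(\widehat{u_1},\ldots,\widehat{u_n})$ therefore maps ${\rm RAD}(G_1)\times\cdots\times{\rm RAD}(G_n)$ continuously into $\ell_2\langle F\rangle$, yielding $A\in{\cal D}_{as}^n$.

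For the inclusion ${\cal D}_p^n\subseteq{\cal D}_{as}^n$---the improvement over \cite[Theorem 3.4]{bu12}---the strategy is to factor $A=A_L\circ\sigma$, where $\sigma\colon E_1\times\cdots\times E_n\to E_1\widehat{\otimes}_\pi\cdots\widehat{\otimes}_\pi E_n$ is the canonical $n$-linear tensor-product map and $A_L$ the induced linearization. The key step is to show that $A_L$ is linear Cohen strongly $p$-summing; this follows by a direct tensor-product computation in the spirit of Theorem \ref{lpCohen}, exploiting the isometric identification $\ell_p\langle F\rangle=\ell_p\widehat{\otimes}_\pi F$ together with the fact that $\|x_j^1\otimes\cdots\otimes x_j^n\|_\pi=\prod_m\|x_j^m\|$ and H\"older's inequality relate $\ell_p$-sequences in the tensor product to $\ell_{np}$-sequences in each $E_m$. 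Once $A_L\in{\cal D}_p$ is established, the linear case yields $A_L\in{\cal D}_{as}$, i.e., $\widehat{A_L}\colon{\rm RAD}(E_1\widehat{\otimes}_\pi\cdots\widehat{\otimes}_\pi E_n)\to\ell_2\langle F\rangle$ is continuous. The multilinear stability of ${\rm Rad}(\cdot)$ (Theorem \ref{theorad}) applied to $\sigma$ supplies a continuous $\widehat{\sigma}\colon{\rm RAD}(E_1)\times\cdots\times{\rm RAD}(E_n)\to{\rm RAD}(E_1\widehat{\otimes}_\pi\cdots\widehat{\otimes}_\pi E_n)$, and composing produces $A\in{\cal D}_{as}^n$. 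The principal technical hurdle is transferring Cohen strong $p$-summability from the multilinear $A$ to its linearization $A_L$; it is precisely at this step that the tensor-product identification underlying Theorem \ref{lpCohen} replaces the Rademacher machinery of \cite{bu12} and shortens the proof.
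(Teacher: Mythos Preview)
Your treatment of ${\cal L}^n\circ{\cal D}_p\subseteq{\cal D}_{as}^n$ is correct and coincides with the paper's: apply the linear inclusion ${\cal D}_p\subseteq{\cal D}_{as}$ (the paper cites \cite[Theorem~1]{bukranz} via the identification ${\cal D}_p=\pi_{p^*}^{\rm dual}$) and then Theorem~\ref{lpCohen}. Note, however, that you have the novelty backwards: ${\cal D}_p^n\subseteq{\cal D}_{as}^n$ \emph{is} \cite[Theorem~3.4]{bu12}; the new content of the corollary is the ${\cal L}^n\circ{\cal D}_p$ inclusion.

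For ${\cal D}_p^n\subseteq{\cal D}_{as}^n$ there is a genuine gap at your ``key step''. The hints you give---$\|x_j^1\otimes\cdots\otimes x_j^n\|_\pi=\prod_m\|x_j^m\|$, H\"older, and the identification $\ell_p\langle F\rangle=\ell_p\widehat\otimes_\pi F$---at best yield the Cohen-$p$ inequality for $A_L$ tested against sequences of \emph{elementary} tensors $z_j=x_j^1\otimes\cdots\otimes x_j^n$ (after rescaling so that $\prod_m\|(x_j^m)\|_{np}=\|(z_j)\|_p$). They do not extend to arbitrary $(z_j)\in\ell_p\bigl(E_1\widehat\otimes_\pi\cdots\widehat\otimes_\pi E_n\bigr)$: writing each $z_j$ as a sum of elementary tensors forces repetition of the test functionals $\varphi_j$, destroying control of $\|(\varphi_j)\|_{w,p^*}$, and H\"older runs the wrong way for the remaining estimate. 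The assertion $A_L\in{\cal D}_p$ is in fact \emph{equivalent} to the factorization ${\cal D}_p^n={\cal D}_p\circ{\cal L}^n$, a Pietsch-type result that the paper quotes as \cite[Proposition~3.1]{bu12}. Once that is invoked, your route and the paper's coincide: write $A=u\circ B$ with $u\in{\cal D}_p$ linear (your $u=A_L$, $B=\sigma$), use the linear inclusion, and apply Theorem~\ref{theorad}. The shortening relative to \cite{bu12} comes from the stability Theorems~\ref{theorad} and~\ref{lpCohen} together with citing \cite[Proposition~3.1]{bu12}, not from a direct tensor computation replacing it.
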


\begin{proof} By $\pi_{p^*}^{\rm dual}$ we denote the ideal of all linear operators having $p^*$-summing adjoints. In the chain
$${\cal D}_p^n = {\cal D}_p \circ {\cal L}^n = \pi_{p^*}^{\rm dual} \circ {\cal L}^n \subseteq {\cal D}_{as}^n, $$
the first equality follows from \cite[Proposition 3.1]{bu12}, the second from \cite{cohen73} and the inclusion from \cite[Theorem 1]{bukranz} and Theorem \ref{theorad}. And in the chain
$${\cal L}^n\circ {\cal D}_p = {\cal L}^n\circ \pi_{p^*}^{\rm dual} \subseteq {\cal D}_{as}^n,$$
the equality follows from \cite{cohen73} and the inclusion from \cite[Theorem 1]{bukranz} and Theorem \ref{lpCohen}.
\end{proof}

\bigskip

\noindent Faculdade de Matem\'atica~~~~~~~~~~~~~~~~~~~~~~Departamento de Ci\^{e}ncias Exatas\\
Universidade Federal de Uberl\^andia~~~~~~~~ Universidade Federal da Para\'iba\\
38.400-902 -- Uberl\^andia -- Brazil~~~~~~~~~~~~ 58.297-000 -- Rio Tinto -- Brazil\\
e-mail: botelho@ufu.br ~~~~~~~~~~~~~~~~~~~~~~~~~e-mails: jamilson@dcx.ufpb.br,\\ \hspace*{9,3cm}jamilsonrc@gmail.com


\begin{thebibliography}{99}\small

\vspace*{-0.4em}
\bibitem {achour07} D. Achour and L. Mezrag, {\it On the Cohen strongly $p$-summing multilinear operators}, J. Math. Anal. Appl. \textbf{327} (2007), 550--563.

\vspace*{-0.4em}
\bibitem{aad} R. Alencar, R. Aron and S. Dineen, {\it A reflexive space of holomorphic functions in infinitely many variables}, Proc. Amer. Math. Soc. {\bf 90} (1984), 407--411.


\vspace*{-0.4em}
\bibitem{arondineen} R. Aron and S. Dineen, {\it Q-reflexive Banach spaces}, Rocky Mountain J. Math. {\bf 27} (1997), 1009--1025.

\vspace*{-0.4em}
\bibitem{aywa} S. Aywa and J.H. Fourie, {\it On summing multipliers and applications}, J. Math. Anal. Appl. {\bf 253} (2001), 166--186.

\vspace*{-0.4em}
\bibitem{tiago} A. T. Bernardino, {\it On cotype and a Grothendieck-type theorem for absolutely summing multilinear operators}, Quaest. Math. {\bf 34} (2011), 513--519.

\vspace*{-0.4em}
\bibitem{BotelhoBlasco} O. Blasco, G. Botelho, D. Pellegrino and P. Rueda, {\it Summability of multilinear mappings: Littlewood, Orlicz and beyond}, Monatsh Math. {\bf 163} (2011), 131--147.

\vspace*{-0.4em}
\bibitem{blascoetal} O. Blasco, J. Fourie and I. Schoeman, {\it On operator valued sequences of multipliers and $R$-boundedness}, J. Math. Anal. Appl. {\bf 328} (2007), 7--23.

\vspace*{-0.4em}
\bibitem{nachr} G. Botelho, {\it Almost summing polynomials}, Math. Nachr. {\bf 211} (2000), 25--36.

\vspace*{-0.4em}
\bibitem{botelhobraunssjunek} G. Botelho, H.-A. Braunss and H. Junek, {\it Almost $p$-summing polynomials and multilinear mappings}, Arch. Math. (Basel) {\bf 76} (2001), 109--118.

\vspace*{-0.4em}
\bibitem{boperu} G. Botelho, D. Pellegrino and P. Rueda, {\it On Pietsch measures for summing operators and dominated polynomials}, Linear Multilinear Algebra {\bf 62} (2014), 860--874.


\vspace*{-0.4em}
\bibitem{budiestel} Q. Bu and J. Diestel, {\it Observations about the projective tensor product of Banach spaces I - $\ell_p \widehat{\otimes}_\pi X$, $1 <p <\infty$}, Quaest. Math. {\bf 24} (2001), 519--533.

\vspace*{-0.4em}
\bibitem{bukranz} Q. Bu and P. T. Kranz, {\it Some mapping properties of $p$-summing adjoint operators}, J. Math. Anal. Appl. {\bf 303} (2005), 585--590.

\vspace*{-0.4em}
\bibitem{bu12} Q. Bu and Z. Shi, {\it On Cohen almost summing multilinear operators}, J. Math. Anal. Appl. \textbf{401} (2013), 174--181.

\vspace*{-0.4em}
\bibitem{jamilson} J.R. Campos, {\it Cohen and multiple Cohen strongly summing multilinear operators}, Linear Multilinear Algebra {\bf 62} (2014), 322–-346.

\vspace*{-0.4em}
\bibitem{jamilson13} J.R. Campos, {\it An abstract result on Cohen strongly summing operators}, Linear Algebra Appl. {\bf 439} (2013), 4047--4055.

\vspace*{-0.4em}
\bibitem{cg} R. Cilia and J. Guti\'errez, {\it Dominated, diagonal polynomials on $l_p$ spaces}, Arch. Math. (Basel) {\bf 84} (2005), 421--431.

\vspace*{-0.4em}
\bibitem {cohen73} J.S. Cohen, {\it Absolutely $p$-summing, $p$-nuclear operators and their conjugates}, Math. Ann. \textbf{201} (1973), 177--200.

\vspace*{-0.4em}
\bibitem{df} A. Defant and K. Floret, {\it Tensor Norms and Operator Ideals}, North-Holland, 1993.

\vspace*{-0.4em}
\bibitem{djt} J. Diestel, H. Jarchow and A. Tonge, {\it Absolutely Summing Operators}, Cambridge University Press, 1995.

\vspace*{-0.4em}
\bibitem{livrodineen} S. Dineen, {\it Complex Analysis on Infinite Dimensional Spaces}, Springer, London, 1999.

\vspace*{-0.4em}
\bibitem{cecilia} C. Fern\'andez, {\it The closed graph theorem for multilinear mappings}, Internat. J. Math. Math. Sci. {\bf 19} (1996), 407--408.

\vspace*{-0.4em}
\bibitem{galicer} D. Galicer and R. Villafa\~ne, {\it Coincidence of extendible vector-vauled ideals with their minimal kernel}, J. Math. Anal. Appl. {\bf 421} (2015), 1743--1766.

\vspace*{-0.4em}
\bibitem{david} H. Jarchow, C. Palazuelos, D. P\'erez-Garc\'ia and I. Villanueva, {\it Hahn-Banach extensions of multilinear forms and summability}, J. Math. Anal. Appl. {\bf 336} (2007), 1161--1177.

\vspace*{-0.4em}
\bibitem{matosjunek} H. Junek and M. Matos, {\it On unconditionally $p$-summing and weakly $p$-convergent polynomials}, Arch. Math. (Basel) {\bf 70} (1998), 41--51.

\vspace*{-0.4em}
\bibitem {mezrag09} L. Mezrag and K. Saadi, {\it Inclusion theorems for Cohen strongly summing multilinear operators}, Bull. Belg. Math. Soc. Simon Stevin, \textbf{16} (2009), 1--11.

\vspace*{-0.4em}
\bibitem{daniellaa} D. Pellegrino, {\it Sharp coincidences for absolutely summing multilinear operators}, Linear Algebra Appl. {\bf 440} (2014), 188--196.

\vspace*{-0.4em}
\bibitem{danieljoilson} D. Pellegrino and J. Ribeiro, {\it On almost summing polynomials and multilinear mappings}, Linear Multilinear Algebra {\bf 60} (2012), 397--413.
\vspace*{-0.4em}
\bibitem{danieljoilsonmonat} D. Pellegrino and J. Ribeiro, {\it On multi-ideals and polynomial ideals of Banach spaces: a new approach to coherence and compatibility}, Monatsh. Math. {\bf 173} (2014), 379--415.

\vspace*{-0.4em}
\bibitem{danielmarcela} D. Pellegrino and M.L.V. Souza, {\it Fully and strongly almost summing multilinear mappings}, Rocky Mountain J. Math. {\bf 36} (2006), 683--698.

\vspace*{-0.4em}
\bibitem{livropietsch} A. Pietsch, \textit{Operator Ideals}, North-Holland, 1980.

\vspace*{-0.4em}
\bibitem{pietsch} A. Pietsch, \textit{Ideals of multilinear functionals
(designs of a theory)}, Proceedings of the second international conference on
operator algebras, ideals, and their applications in theoretical physics
(Leipzig, 1983), 185--199, Teubner-Texte Math., 67, Teubner, Leipzig, 1984.

\vspace*{-0.4em}
\bibitem{popa} D. Popa, {\it Composition results for strongly summing and dominated multilinear operators}, Cent. Eur. J. Math. {\bf 12} (2014), 1433--1446.

\vspace*{-0.4em}
\bibitem{popaarchivnovo} D. Popa, {\it Almost summing and multiple almost summing multilinear operators on $\ell_p$ spaces}, Arch. Math. (Basel) {\bf 103} (2014), 291--300.

\vspace*{-0.4em}
\bibitem{ryan} R. Ryan, {\it Introduction to Tensor Products of Banach Spaces}, Springer, 2010.

\vspace*{-0.4em}
\bibitem{diana} D.M. Serrano-Rodr\'iguez, {\it Absolutely $\gamma$-summing multilinear operators}, Linear Algebra Appl. {\bf 439} (2013), 4110-4118.

\vspace*{-0.4em}
\bibitem{tesecarlos} C.A.S. Soares, {\it Aplica\c c\~oes Multilineares e Polin\^omios Misto Somantes}, PhD. Thesis, UNICAMP, 1998.

\vspace*{-0.4em}
\bibitem{tarieladze} N.N. Vakhania, V.I. Tarieladze and S.A. Chobanyan, {\it Probability Distributions on Banach Spaces}, D. Reidel Publishing Co., Dordrecht, 1987.

\vspace*{-0.4em}
\bibitem{valdivia} M. Valdivia, {\it Some properties in spaces of multilinear functionals and spaces of polynomials}, Math. Proc. R. Ir. Acad. {\bf 98A} (1998), 87--106.

\end{thebibliography}
\end{document}